\newtheorem{thm}{Theorem}[section]
\newtheorem{lem}[thm]{Lemma}
\newtheorem{conj}[thm]{Conjecture}
\newtheorem*{thmu}{Theorem}
\newtheorem{defn}[thm]{Definition}
\newtheorem{lemma}[thm]{Lemma}
\theoremstyle{remark}
\let\c@equation\c@thm
\numberwithin{equation}{section}
\newcommand{\Q}{\mathbb{Q}}
\newcommand{\R}{\mathbb{R}}
\newcommand{\Z}{\mathbb{Z}}
\newcommand{\N}{\mathbb{N}}
\newcommand{\T}{\mathbb{T}}
\renewcommand{\S}{{\mathcal S}}
\renewcommand{\i}{\iota}
\DeclareMathOperator{\Tr}{Tr}
\newcommand{\td}[1]{{\red{\textbf {[#1]}}}}
\newcommand{\red}[1]{{\color{red} #1}}
\newcommand{\old}[1]{}
\title{A Family of Minimal and Renormalizable Rectangle Exchange Maps}
\author{Ian Alevy \textsuperscript{1}}
\thanks{1. Division of Applied Mathematics, Brown University, Providence, RI 02912, USA; ian\_alevy@brown.edu.}
\author{Richard Kenyon \textsuperscript{2}}
\thanks{2. Department of Mathematics, Brown University, Providence, RI 02912, USA; rkenyon@math.brown.edu.}
\author{Ren Yi \textsuperscript{3}}
\thanks{3. Department of Mathematics, Brown University, Providence, RI 02912, USA; renyi@math.brown.edu.}
\date{\today}
\begin{document}

\begin{abstract}
A \emph{domain exchange map} (DEM) is a dynamical system defined on a smooth Jordan domain which is a piecewise translation. We explain how to use cut-and-project sets to construct minimal DEMs. Specializing to the case in which the domain is a square and the cut-and-project set is associated to a Galois lattice, we construct an infinite family of DEMs in which each map is associated to a PV number. We develop a renormalization scheme for these DEMs. Certain DEMs in the family can be composed to create multistage, renormalizable DEMs. 
\end{abstract}

\maketitle
\tableofcontents   

\section{Introduction}

A \textbf{\emph{smooth Jordan domain}} \(X\) is non-empty closed bounded set in \(\R^2\) whose boundary is a piecewise smooth Jordan curve. We construct a dynamical system on \(X\) which is a piecewise translation known as a \textbf{\emph{domain exchange map (DEM)}}. The dynamical system is a 2-dimensional generalization of an interval exchange transformation.
\begin{defn}
Let \(X\) be a Jordan domain partitioned into smaller Jordan domains, with disjoint interiors, in two different ways 
\[ X = \bigcup_{k=0}^N A_k = \bigcup_{k=0}^N B_k\]
such that for each \(k\), \(A_k\) and \(B_k\) are translation equivalent, i.e., there exists \(v_k\in \R^2\) such that \(A_k= B_k+v_k\). A \textbf{\emph{domain exchange map}} is the piecewise translation on \(X\) defined for \(x\in \mathring A_k\) by 
\[ T(x) = x+v_k.\]
The map is not defined for points \(x\in  \bigcup_{k=0}^N \partial A_k\).
\end{defn} 

In section \(2\) we explain how to use \textbf{\emph{cut-and-project sets}} to define a DEM on any smooth Jordan domain \(X\). 
\begin{defn}
Let \(L\) be a full-rank lattice in \(\R^3\) and \(X\) a domain in the \(xy\)-plane in \(\R^3\). Define 
\[ 
P = \{ \pi_z(p) ~:~ p \in L \text{ and } \pi_{xy}(p)\in X\}.
\]
where $\pi_z$ is the projection onto the $z$ axis and $\pi_{xy}$ is the projection onto the $xy$-plane.
The point set \(P\) is a \textbf{\emph{cut-and-project set}} if the following two properties are satisfied:
\begin{enumerate}
\item \(\pi_z |_L \) is injective 
\item \(\pi_{xy}(L)\) is dense in \(\R^2\).
\end{enumerate}
\end{defn}

In this setting we define $\Lambda(X, L)$ to be the set of lattice points 
\[
\Lambda(X,L)= \{ \textbf x \in L ~:~  \pi_{xy}(\textbf x)\in X\}.
\]
The projection $\pi_{xy}(\Lambda(X, L))$ is dense in $X$. 

The DEM is defined by projecting a dynamical system on $\Lambda(X, L)$ onto \(X\).
\begin{figure}[h]
  \begin{center}
    \includegraphics[width=\textwidth]{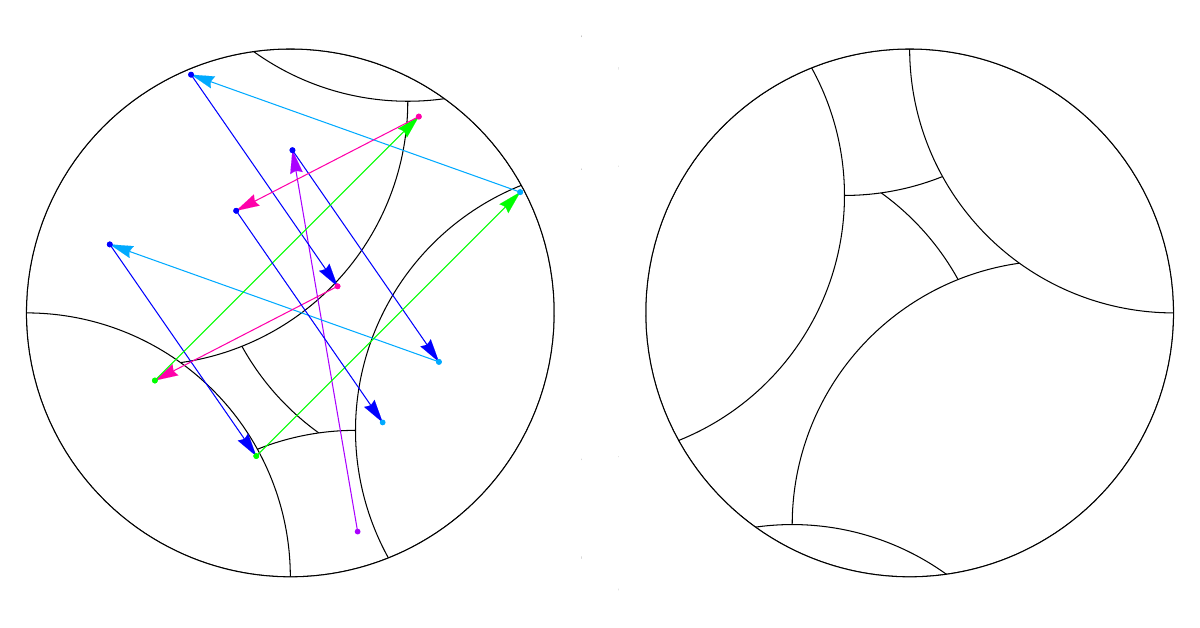}
\caption{Domain exchange map on a disk and the forward orbit of a point. One iteration of the map consists in translating each of the seven regions delineated by the black boundaries in the first panel
to its position shown in the second panel.}
\label{fig:dem-circ}
  \end{center}
\end{figure}
Figure \ref{fig:dem-circ} shows a DEM, in which \(X\) is the unit disk, constructed in this manner. The boundary of each tile is an arc of a circle with unit radius.
For almost every point $x$
the forwards and backwards orbits of \(x\) under the DEM are well-defined. We characterize the orbits of DEMs constructed using cut-and-project sets:
\begin{thm}\label{minimality}
For a DEM in $\R^2$ associated to a cut-and-project set from $\R^3$, every well-defined orbit is dense and equidistributed.
\end{thm}

The DEMs produced by our construction are amenable to analysis when the lattice and domain have a special algebraic structure. A \textbf{\emph{Pisot-Vijayaraghavan number}}, more simply called a {\bf\emph{PV number}}, is a real algebraic integer with modulus larger than \(1\) whose Galois conjugates have modulus strictly less than one. 

Let $\lambda=\lambda_3$ be a PV number whose Galois conjugates 
$\lambda_1,\lambda_2$ are real. Then 
$\Q[\lambda]$ has three embeddings into $\R$, and we can identify $\R^3$ with the product of these three embeddings, with the $x$-, $y$- and $z$-coordinates corresponding to embeddings sending $\lambda$ to $\lambda_1,\lambda_2, \lambda_3$ respectively.  Then $\Z[\lambda]$ is a lattice in $\R^3$ of the above type, and 
$$\pi_{xy}(a+b\lambda+c\lambda^2) =(a+b\lambda_1+c\lambda_1^2, a+b\lambda_2+c\lambda_2^2).$$
Multiplication by $\lambda$ is an integer transformation of $\Z[\lambda]$. We call this the \textbf{\emph{Galois embedding}} of the lattice \(\Z[\lambda]\). Note that \(\Z[\lambda]\) can be identified with \(\Z^3\) under the map 
\[ (a,b,c) \mapsto a+b \lambda + c\lambda^2.\]

When \(X\) is a smooth Jordan domain and \(L\) is the Galois embedding of a PV number whose Galois conjugates are real then the point set \(\Lambda(X,L)\) satisfies the conditions of being a cut-and-project set. We call a DEM associated to a Galois lattice a \textbf{\emph{PV DEM}}. We give a detailed analysis of PV DEMs in the case when the lattice is a Galois lattice and \(X\) is the unit square $[0,1]^2$. Since the tiles inherit their shape from the boundary of \(X\), under these assumptions the tiles are rectilinear polygons. We call these DEMs \textbf{\emph{rectangle exchange maps (REMs)}}. 

One way to construct a PV DEM is to find a \textbf{\emph{Pisot matrix}} whose eigenvalues are all real. A \textbf{\emph{Pisot matrix}} is an integer matrix with one eigenvalue greater than $1$ in modulus and the remaining eigenvalues strictly less than $1$ in modulus (in particular, its leading eigenvalue is a PV number). Define $\S$ to be the following set of matrices:
\[
\S = \left \{ M_n=\begin{bmatrix}
0 & 1 & 0 \\
0 & 0 & 1 \\
1 & -n & n+1
\end{bmatrix} : n \geq 6 \right\}
\]
We will show in Section \ref{sec:monoid} that every matrix $M_n \in \S$ is a Pisot matrix. For $M_n \in \S$, let $\lambda$ be the leading eigenvalue of $M_n$. The Galois embedding of $\Z[\lambda]$ gives rise to a PV REM (Section \ref{sec:cons}). Let \(T_{M}\) denote the PV REM associated to the Galois embedding of the eigenvalues of \(M\).

We extend the family $\{T_{M_n}: \ M_n \in \S\}$ of PV REMs to a larger family of REMs via the monoid of matrices \(\mathcal M\) consisting of nonempty products of matrices in \(\S\). Lemma \ref{lem:monoid} establishes that \(\mathcal M\) is in fact a monoid of Pisot matrices.
\begin{lem}\label{lem:monoid}
If \(W\in \mathcal M\) then its eigenvalues \(\lambda_1,\lambda_2\) and \(\lambda_3\) are real and satisfy the inequalities 
\[0< \lambda_1< \lambda_2<1<\lambda_3.\]
\end{lem}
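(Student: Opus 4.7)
The plan is to first verify the inequalities for each single generator $M_n \in \mathcal{S}$, and then extend to arbitrary products in $\mathcal{M}$ by combining an invariant-cone argument (for the largest and smallest eigenvalues) with a trace bound (for the middle one).

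For the base case, the characteristic polynomial of $M_n$ is
\[
p_n(\lambda)=\lambda^3-(n+1)\lambda^2+n\lambda-1.
\]
Evaluating at the sample points yields $p_n(0)=-1$, $p_n(1/2)=n/4-9/8>0$ (using $n\ge 6$), $p_n(1)=-1$, $p_n(n)=-1$, and $p_n(n+1)=n^2+n-1>0$. The intermediate value theorem then produces three real roots, one in each of $(0,1/2)$, $(1/2,1)$, and $(n,n+1)$, so the generators satisfy the claim.

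For a general $W\in\mathcal{M}$, I would introduce the cone
\[
C=\{(x,y,z)\in\R^3:\ 0\le x,\ 6x\le y,\ 6y\le z\}
\]
and check directly that each $M_n$ preserves it: writing $M_n(x,y,z)^T=(y,z,x-ny+(n+1)z)^T$, the nontrivial inequality reduces to $z'-6y'=x-ny+(n-5)z\ge x+(5n-30)y\ge 0$, which holds because $z\ge 6y$ and $n\ge 6$. A similar computation shows that any two-step composition $M_n M_{n'}$ maps $C\setminus\{0\}$ into the interior $\mathring C$, so by Birkhoff--Hopf every $W\in\mathcal M$ of length at least two has a simple real positive leading eigenvalue $\lambda_3$ that strictly dominates the other two in modulus, with eigenvector in $\mathring C$. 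An analogous cone argument applied to the inverse matrices $M_n^{-1}$ establishes that the smallest eigenvalue $\lambda_1$ is real, positive, simple, and less than $1$. Since $\det W=1$, the third eigenvalue $\lambda_2=(\lambda_1\lambda_3)^{-1}$ is automatically real and positive.

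The hard part will be the bound $\lambda_2<1$, equivalently $\lambda_1+\lambda_2<1$, i.e.\ $\lambda_3(W)>\operatorname{tr}(W)-1$. For a single generator $M_n$ this holds because $\operatorname{tr}(M_n)=n+1$ and $\lambda_3\in(n,n+1)$; for a two-fold product one computes $\operatorname{tr}(M_mM_n)=mn+1$ and checks, via the characteristic polynomial $\lambda^3-(mn+1)\lambda^2+(mn-m-n-2)\lambda-1$, that the root larger than $1$ lies in $(mn,mn+1)$. The general case should follow from an induction on the length of the product, tracking jointly $\operatorname{tr}(W)$ and $\lambda_3(W)$ as one multiplies by a new generator and using the fact that the cone-invariance already forces $\lambda_3(W)$ to absorb almost all of the trace. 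Closing this trace--eigenvalue induction in a way that handles all products, and not just those of a single symbol, is the step I expect to require the most care.
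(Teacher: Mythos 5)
Your plan has the right shape---handle the extreme eigenvalues $\lambda_1,\lambda_3$ by a Perron--Frobenius-type argument, then treat $\lambda_2<1$ separately---and that matches the paper's strategy. However, both halves have genuine problems as you have written them.

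For $\lambda_3$ and $\lambda_1$, the paper conjugates $M_n$ (respectively $M_n^{-1}$) to a non-negative primitive matrix and quotes Perron--Frobenius; your cone-contraction route via Birkhoff--Hopf is a legitimate alternative, and the invariance check $M_nC\subseteq C$ for $C=\{x\ge0,\;6x\le y,\;6y\le z\}$ is correct. But the stated claim that \emph{any} two-step composition maps $C\setminus\{0\}$ into $\mathring C$ is false for the cone you chose: take $(0,1,6)\in C$. Then $M_6(0,1,6)=(1,6,36)$, still on $\partial C$ since $6\cdot1=6$ and $6\cdot6=36$, and $M_6M_6(0,1,6)=(6,36,217)$, again on $\partial C$ since $6\cdot6=36$. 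So the Birkhoff--Hopf hypothesis is not met at length two. This is repairable (three steps suffice in the example, or one can choose a strictly contracted cone), but the argument as written does not close. You also need to add the determinant step ($\det W=1$ plus strict dominance forces $\lambda_3>1$ rather than merely $\lambda_3\ge|\lambda_i|$), which you gesture at but do not state.

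The more serious gap is the one you yourself flag: you never prove $\lambda_2<1$ for general products. Two points here. First, ``$\lambda_2<1$, equivalently $\lambda_1+\lambda_2<1$'' is a logical error: $\lambda_1+\lambda_2<1$ is sufficient, not equivalent (e.g. $\lambda_1=0.5$, $\lambda_2=0.9$). Second, you have set yourself a harder target than necessary. Since the characteristic polynomial of a conjugate $P$ of $W$ is $x^3-\Tr(P)x^2+b(P)x-1$ with $b(P)$ the sum of principal $2\times2$ minors, the condition $q_P(1)<0$ that actually yields $\lambda_2<1$ (once $0<\lambda_1<1<\lambda_3$ are known) is $b(P)<\Tr(P)$; in eigenvalue terms this is $\lambda_1+\lambda_2<1+1/\lambda_3$, strictly weaker than your $\lambda_1+\lambda_2<1$. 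The paper proves $b(P)<\Tr(P)$ by a fairly delicate double induction: it first establishes sign constraints and ratio inequalities on the entries of $P^{-1}$ (of the form $a_{1j}>3a_{2j}$, $a_{1j}>3a_{3j}$), then uses those to show $[P]_{2,2},[P]_{3,3}\le0$ and $[P]_{1,1}\le P_{3,3}$, from which $b(P)\le[P]_{1,1}\le\Tr(P)$ follows. Nothing in your sketch of a ``trace--eigenvalue induction'' replaces this machinery, and since the induction step when multiplying by a new generator interacts with all three eigenvalues simultaneously, the naive approach of ``tracking $\Tr(W)$ and $\lambda_3(W)$'' does not obviously terminate. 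This is the core of the lemma and it is missing.
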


Avila and Delecroix in \cite{ad2015} give a neat criterion for checking whether a family of matrices generates a monoid of Pisot matrices.
Even though our (computational) proof of Lemma \ref{lem:monoid} is somewhat along the same lines, 
we were not able to apply their results directly to this family.

\emph{\textbf{Admissible REMs}} are defined by a subset of \textbf{admissible matrices} $\mathcal M_A \subset \mathcal M$ for which the REM \(T_W\) associated to the matrix \(W\in \mathcal M_A\) has the same combinatorics as the REM $T_{M_n}$ (see definition \ref{defn:admissible}). We say that two REMs, $T, T': X \to X$ with associated partitions $\mathcal A=\{A_i\}_{i=1}^N$, $\mathcal A'=\{A'_i\}_{i=1}^{N'}$ respectively, 
have the \textbf{\emph{same combinatorics}} if 
\begin{enumerate}
\item The cardinalities of the partitions $\mathcal A, \mathcal A'$ are equal.
\item For each \(i\), the polygons $A_i \in \mathcal A$ and $A'_i \in \mathcal A'$ have the same number of edges and edge directions, that is, they are the same up to changing edge lengths.
\item Two elements $A_i$ and $A_j$ in $\mathcal A$ meet along a common edge if and only if $A'_i$ and $A'_j$ share an edge in the corresponding position. 
\end{enumerate}
See Figure \ref{fig:same-comb} for an example of two REMs with the same combinatorics. 
The admissibility condition on $\mathcal M_A \subset \mathcal M$ is a set of linear equations in the eigenvectors of \(M\) (see definition \ref{defn:admissible}).

Let \(W\in \mathcal M_A\) be written $W=M_{n_L} \dots M_{n_2} M_{n_1}$. Define $W_k = M_{n_k} \dots M_{n_2} M_{n_1}$ for $1 \leq k \leq L$. When each REM $T_{W_k}$ has the same combinatorics as $T_{W}$ for every $k=1,\dots, L $ we call \(T_W\) a \textbf{\emph{multistage REM}} (see Definition \ref{defn:multi}). We use \({\mathcal M}_R\subset{\mathcal M}_A\) to denote the subset of admissible matrices which produce multi-stage REMs.

\begin{figure}
  \begin{center}
    \includegraphics[scale=1.1]{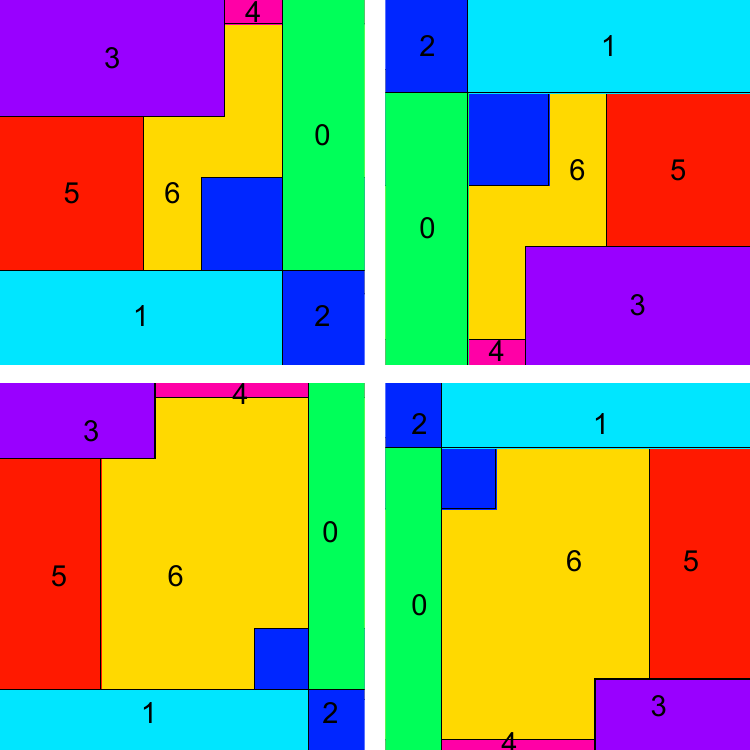}
\caption{The REM described by the top panels has the same combinatorics as the REM described by the lower two panels.}
\label{fig:same-comb}
  \end{center}
\end{figure}


For a multistage REM we study the \textbf{\emph{first return map}} to one of the tiles in the partition and prove that it is affinely conjugate to the original map. This is known as a \textbf{\emph{renormalization scheme}}. Renormalization schemes are an essential tool in the study of long term behavior of dynamical systems.

\begin{defn}
Let \(T:X\to X\) be a map and \(Y\subset X\). The \textbf{\emph{first return map}} $\hat T|_Y$ maps a point $p\in Y$ to the first point in the forward orbit of $p$ lying in $Y$, i.e.
\[
\widehat T|_Y(p)=T^m(p) \quad \mbox{where $m=\min\{k \in \Z_+: T^k(p) \in Y\}$}.
\]
The notation \( T|_Y\) means the dynamical system \(T\) restricted to \(Y\).
\end{defn}
When \(X\) is a finite measure space and \(T\) a measure-preserving transformation, the Poincar\'e Recurrence Theorem \cite{p2017}
ensures that the first return map is well-defined for almost every point in the domain. 

\begin{defn}
A dynamical system \(T_1:X_1\to X_1\) has a \textbf{\emph{renormalization scheme}} if there exists a proper subset \(X_2\subset X_1\), a dynamical system \(T_2:X_2\to X_2\), and a homeomorphism \(\phi: X_1\to X_2\) such that
\[ \widehat T_1 |_{Y_1} = \phi ^{-1} \circ T_2 \circ \phi .\]
A dynamical system is \textbf{\emph{renormalizable}} or \textbf{\emph{self-induced}} if \(T_2=T_1\).
\end{defn} 

\subsection{Main Results}
The main focus of our paper is the development of a renormalization scheme for the multistage REM \(T_M\),
defined in Section \ref{sec:cons} below, for every \(M\in \mathcal M_A\).

\begin{thm}\label{thm:single}
Let \(M \in \S\) be a matrix and \(T_{M}\) the PV REM 
associated to the Galois lattice \(L_{\lambda}\) where \(\lambda\) is the leading eigenvalue of \(M\). Label the eigenvalues of \(M\) by \(\lambda_1,\lambda_2\) and \(\lambda_3\) in increasing order. Let \(Y \subset X\) be the tile in the partition corresponding to the rectangle \([1-\lambda_1, 1]\times[1- \lambda_2,1]\). The REM $T_\lambda$ is renormalizable, i.e., 
\begin{align*}
 \widehat T_{M}|_Y = \phi^{-1} \circ  T_{M} \circ \phi
\end{align*}
where $\phi: X \to Y$ is the affine map
\[
\phi:(x,y) \mapsto \left(\frac{x+\lambda_1-1}{\lambda_1}, \frac{y+\lambda_2-1}{\lambda_2}\right).
\]
\end{thm}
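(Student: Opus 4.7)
The plan is to work at the level of the Galois lattice $L = \Z[\lambda]$, where the dynamics of $T_M$ become a successor map and the affine conjugation $\phi$ lifts to a lattice automorphism induced by multiplication by $\lambda$. First I would recall from the cut-and-project construction that $T_M$ is the $\pi_{xy}$-projection of the successor map on $\Lambda(X, L)$: lattice points in $\Lambda(X, L)$ are ordered by their $z$-coordinate, and $T_M$ projects the successor relation to $X$. In particular, for each $\mathbf{x} \in \Lambda(X, L)$ there is a lattice vector $\mathbf{v} \in L$ (constant on the tile containing $\pi_{xy}(\mathbf{x})$) such that $\mathbf{x} + \mathbf{v}$ is the $z$-successor of $\mathbf{x}$ and $T_M(\pi_{xy}(\mathbf{x})) = \pi_{xy}(\mathbf{x} + \mathbf{v})$.

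Next I would lift $\phi^{-1}$ to the lattice. I plan to define $\Phi: L \to L$ by $\Phi(\alpha) = \lambda\alpha + (1-\lambda)$, using the ring structure of $\Z[\lambda]$. The characteristic polynomial of any $M_n \in \S$ is $x^3 - (n+1)x^2 + nx - 1$, whose constant term is $-1$, so $\lambda$ is a unit in $\Z[\lambda]$ and $\Phi$ is an affine bijection of $L$ onto itself. Under the Galois embedding, $\Phi$ acts componentwise as
\[
(x, y, z) \longmapsto \bigl(\lambda_1 x + 1 - \lambda_1,\ \lambda_2 y + 1 - \lambda_2,\ \lambda_3 z + 1 - \lambda_3\bigr),
\]
so its $xy$-projection coincides with $\phi^{-1}: X \to Y$, and its $z$-action is strictly increasing (since $\lambda_3 > 0$). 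Consequently $\Phi$ restricts to a bijection $\Lambda(X, L) \to \Lambda(Y, L)$ preserving the $z$-ordering.

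With these two liftings in hand, the renormalization identity reduces to a short computation. Given $p \in Y$ lifted by $\mathbf{x} \in \Lambda(Y, L)$, set $\mathbf{x}' = \Phi^{-1}(\mathbf{x}) \in \Lambda(X, L)$, so that $\phi(p) = \pi_{xy}(\mathbf{x}')$. Letting $\mathbf{v}$ be the successor translation at $\mathbf{x}'$, one has $\Phi(\mathbf{x}' + \mathbf{v}) = \mathbf{x} + \lambda \mathbf{v}$, hence $\phi^{-1}(T_M(\phi(p))) = \pi_{xy}(\mathbf{x} + \lambda \mathbf{v})$. To identify this point with $\widehat T_M|_Y(p)$ I would rule out intermediate returns to $Y$: any $\mathbf{y} \in \Lambda(Y, L)$ with $\pi_z(\mathbf{x}) < \pi_z(\mathbf{y}) < \pi_z(\mathbf{x} + \lambda \mathbf{v})$ would pull back under $\Phi^{-1}$ to a point of $\Lambda(X, L)$ strictly between $\mathbf{x}'$ and $\mathbf{x}' + \mathbf{v}$ in $z$-order, contradicting the minimality of $\mathbf{v}$.

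The main obstacle will be verifying rigorously that $\Phi$ restricts to an honest bijection $\Lambda(X, L) \to \Lambda(Y, L)$. This rests on two ingredients: that $\lambda$ is a unit of $\Z[\lambda]$ (a characteristic-polynomial computation, using the explicit form of $M_n$) and that $\phi^{-1}$ carries $X$ bijectively onto $Y$ (a direct coordinate check at the four corners of $X$). Some care is also needed at tile boundaries, where $T_M$ is undefined, but these form a measure-zero set and do not obstruct the equality on the interiors where the dynamics are defined.
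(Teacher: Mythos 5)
Your proposal is essentially the paper's own proof, restated in the language of the ring $\Z[\lambda]$ rather than explicit matrices: the affine lattice map $\Phi(\alpha)=\lambda\alpha+(1-\lambda)$ that you define is exactly the paper's $\Psi(v)=M_n^{t}v+(1,-1,0)^{T}$ under the identification $(a,b,c)\leftrightarrow a+b\lambda+c\lambda^2$ (multiplication by $\lambda$ acts as $M_n^{t}$), and the key steps — showing $\Phi$ bijects $\Lambda(X,L)$ onto $\Lambda(Y,L)$, that it scales the $z$-coordinate by $\lambda_3>1$ hence preserves the successor order, and then ruling out intermediate returns — coincide with the paper's argument. No substantive gap.
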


We next prove that multistage REMs are minimal and have a renormalization scheme with multiple steps.

\begin{thm}\label{thm:mult-mi}
Multistage REMs are minimal.
\end{thm}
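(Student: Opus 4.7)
The plan is to reduce minimality of a multistage REM directly to Theorem \ref{minimality}, since the extra structure (admissibility, multistage renormalization) is only needed for the renormalization scheme, not for minimality itself. Concretely, I would argue that every multistage REM $T_W$ with $W\in\mathcal M_R\subset\mathcal M$ is by construction a PV REM, hence a DEM associated to a cut-and-project set, and therefore minimal by Theorem \ref{minimality}.

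First, I would unwind the definitions. Given $W\in\mathcal M_R$, Lemma \ref{lem:monoid} says $W$ has three real eigenvalues $0<\lambda_1<\lambda_2<1<\lambda_3$, so $\lambda:=\lambda_3$ is a PV number whose Galois conjugates are real. Thus $L_\lambda=\Z[\lambda]$ has a Galois embedding into $\R^3$, and the REM $T_W$ on $X=[0,1]^2$ is, by the construction in Section \ref{sec:cons}, the PV REM associated to the cut-and-project set $\Lambda(X,L_\lambda)$.

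Second, I would verify that $(X,L_\lambda)$ genuinely satisfies the two conditions in the definition of a cut-and-project set: injectivity of $\pi_z|_{L_\lambda}$ holds because $1,\lambda,\lambda^2$ are $\Q$-linearly independent (as $\lambda$ is a cubic algebraic integer), and density of $\pi_{xy}(L_\lambda)$ in $\R^2$ follows from the fact that the images of $1,\lambda,\lambda^2$ under the two conjugate embeddings span $\R^2$ and $\Z[\lambda]$ is dense under either embedding individually. (These are exactly the facts already used in the excerpt to declare $L_\lambda$ a Galois lattice producing a PV DEM.)

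With those pieces in place, Theorem \ref{minimality} applies verbatim to $T_W$, giving that every well-defined orbit is dense, i.e., $T_W$ is minimal. I do not foresee a genuine obstacle here: the hardest part is merely bookkeeping, namely making explicit that the definition of ``multistage REM'' in Definition \ref{defn:multi} produces exactly a PV REM in the sense of Section \ref{sec:cons}, so that Theorem \ref{minimality} can be invoked without any additional hypotheses. The admissibility and multistage conditions impose combinatorial constraints on the partition of $X$ but do not alter the underlying cut-and-project nature of $T_W$, which is all that is needed.
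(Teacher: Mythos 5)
Your reduction to Theorem \ref{minimality} is the right high-level idea, and it is what the paper does, but there are two gaps in how you carry it out. First, you identify the relevant cut-and-project lattice as the Galois embedding of $\Z[\lambda]$, i.e.\ the lattice whose projection uses the vectors $(1,\lambda_i,\lambda_i^2)$. That is correct for the generators $M_n\in\S$, since $(1,\lambda_i,\lambda_i^2)$ is an eigenvector of the companion matrix $M_n$, but for a general product $W\in\mathcal M_A$ the normalized eigenvectors $\xi_1=(1,x,x')$, $\xi_2=(1,y,y')$ used in Section \ref{sec:cons-multi} to define $\pi_{xy}$ are not of this form. The multistage REM $T_W$ is built from the lattice $\{(\mathbf n\cdot\xi_1,\mathbf n\cdot\xi_2,\mathbf n\cdot\xi_3):\mathbf n\in\Z^3\}$, which is only commensurable with, not equal to, the Galois lattice $\Z[\lambda]$; so one cannot literally say ``$T_W$ is by construction the PV REM of $\Lambda(X,L_\lambda)$.'' You would at least need to pass through commensurability to transfer density from $\Z[\lambda]$ to this lattice, a step you do not make.

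Second, and more seriously, the density argument itself is incomplete. Knowing that the three generators of the lattice have images under $\pi_{xy}$ spanning $\R^2$, and that each coordinate projection is dense in $\R$, does not imply that the joint image is dense in $\R^2$ (e.g.\ $\Z(1,0)+\Z(0,1)+\Z(\sqrt2,0)$ is rank $3$, spans $\R^2$, is dense on the first axis, yet is contained in $\R\times\Z$). Ruling out such a degeneration is exactly what needs an algebraic input, and this is where the paper's proof does real work: it observes that, since $\det W=1$ and admissibility forces $\pm1\notin\mathrm{Spec}(W)$, the characteristic polynomial of $W$ has no integer (hence no rational) root and is therefore irreducible over $\Q$, so $W$ admits no proper $\Q$-invariant subspace, which in turn forces $\pi_{xy}(\Lambda(X,L))$ to be dense. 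You should replace the span/density heuristic with this irreducibility argument (or an equivalent one) before invoking Theorem \ref{minimality}.
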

\begin{thm}\label{thm:mult}
Let $W=M_{n_L} \cdots M_{n_2} M_{n_1}\in  \mathcal M_R$ and define $W_k = M_{n_k} \cdots M_{n_2} M_{n_1}$ for $1 \leq k \leq L$. The associated multistage REM is renormalizable, i.e., for each \(k\) there exists \(Y_k\subset X\) and an affine map \( \phi_k: Y_k \to X\) such that 
\begin{align*}
 \widehat T_{W_{k+1}}|_{Y_{k+1}} = \phi^{-1}_k \circ  T_{W_k} \circ \phi_k.
\end{align*}
Each affine map has the form 
\[
\phi_k:(x,y) \mapsto \left(\frac{x+x_k-1}{x_k}, \frac{y+y_k-1}{y_k}\right) 
\]
where \(x_k\) and \(y_k\) are the dimensions of the tile in the partition corresponding to the rectangle \([1-x_k,1]\times[1-y_k,1]\).
\end{thm}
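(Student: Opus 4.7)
The plan is to proceed by induction on $k$, with Theorem \ref{thm:single} providing the template for each individual renormalization step and the monoid structure of $\mathcal{M}_R$ ensuring the inductive hypothesis carries across stages.

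For the base case $k=1$, the claim is that the first return of $T_{W_2}=T_{M_{n_2}M_{n_1}}$ to a tile $Y_2$ is affinely conjugate to $T_{W_1}=T_{M_{n_1}}$. Structurally this mirrors Theorem \ref{thm:single}, except that the induced map recovers a distinct PV REM rather than the original. I would verify it by examining the cut-and-project construction directly: the tile $Y_2\subset X$ has dimensions $(x_2,y_2)$ determined by the smaller Galois conjugates of the leading eigenvalue of $W_2$, and $\phi_1$ rescales $Y_2$ to fill the unit square. Restricting the Galois lattice of $T_{W_2}$ to the prism over $Y_2$ and pushing forward by $\phi_1$ should yield the Galois lattice of $T_{M_{n_1}}$ on $X$. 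The crucial input is that multiplication by $M_{n_2}$ on $\Z^3$ realizes exactly this rescaling between lattices, while admissibility of $W$ (inherited by $W_2\in\mathcal{M}_R$) forces the partitions to match combinatorially.

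The inductive step is structurally identical. Given $W_{k+1}=M_{n_{k+1}}W_k$ and the hypothesis for smaller indices, I would identify $Y_{k+1}$ with dimensions $(x_{k+1},y_{k+1})$ from the smaller Galois eigenvalues of $W_{k+1}$ and take $\phi_k$ to be the corresponding affine rescaling. To establish $\widehat{T}_{W_{k+1}}|_{Y_{k+1}}=\phi_k^{-1}\circ T_{W_k}\circ \phi_k$, I would match the Galois lattice of $T_{W_{k+1}}$ restricted over $Y_{k+1}$ (after pushing by $\phi_k$) to that of $T_{W_k}$ on $X$, then verify that orbits of $T_{W_{k+1}}$ return to $Y_{k+1}$ in exactly the number of steps corresponding to a single application of $T_{W_k}$. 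The multistage condition $W\in\mathcal{M}_R$ ensures that the intermediate REMs $T_{W_j}$ all share combinatorics with $T_W$, providing the consistency needed for the argument at every level of the induction.

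The main obstacle I anticipate is carrying out the lattice identification with full rigor. Although the underlying integer lattice $\Z^3$ is shared by all of the $T_{W_j}$, the Galois embeddings into $\R^3$ differ because the leading PV eigenvalues change across stages; one must show that the action of $M_{n_{k+1}}$ on $\Z^3$, together with the affine map $\phi_k$, intertwines these two embeddings in the correct way. A secondary difficulty is verifying the return time exactly: one must confirm that orbits of $T_{W_{k+1}}$ visiting $Y_{k+1}$ do so on the schedule dictated by single applications of $T_{W_k}$, with neither missed nor extra returns. The admissibility conditions, which are expressed as linear constraints on eigenvectors in Definition \ref{defn:admissible}, should supply precisely the control needed to rule out such pathological returns and close the induction.
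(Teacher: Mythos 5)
There is a genuine conceptual gap in your proposal, rooted in a misidentification of the intermediate REMs $T_{W_k}$. In the paper's construction these are \emph{not} the PV REMs associated to the Galois lattices of the matrices $W_k = M_{n_k}\cdots M_{n_1}$. Rather, $T_{W_k}$ is defined by the projection $\pi_{xy}^k(\mathbf x) = (\xi_1^k\cdot\mathbf x,\ \xi_2^k\cdot\mathbf x)$, where $\xi_1^k$ and $\xi_2^k$ are scalings of $W_k\xi_1$ and $W_k\xi_2$ with $\xi_1,\xi_2$ the contracting eigenvectors of the \emph{full} product $W$. Consequently the tile dimensions $(x_k,y_k)$ are the second components of $W_k\xi_1$ and $W_k\xi_2$ after normalization, not the smaller Galois conjugates of the leading eigenvalue of $W_2$ as you assert. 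If one insists on an eigenvector interpretation, $\xi_1^k$ is an eigenvector of the cyclic rotation $W_k M_{n_L}\cdots M_{n_{k+1}} = W_k W W_k^{-1}$, not of $W_k$; so peeling off a generator produces a cyclic permutation of the word $W$, not the truncation $W_{k-1}$. Your plan to identify the induced system with the standalone PV REM $T_{M_{n_1}}$ therefore cannot go through: there is no integer-matrix action sending the Galois lattice of $W_{k+1}$ to that of $W_k$ in a way that matches the eigenvectors, and the algebraic miracle the paper relies on --- namely $M_{n_{k+1}}\xi_1^k = x_k\,\xi_1^{k+1}$, which holds exactly because $\xi_1^{k+1}$ is a scaling of $M_{n_{k+1}}(W_k\xi_1)$ --- does not hold for eigenvectors of the truncated products.

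Two secondary remarks. First, the paper does not argue by induction on $k$; at each stage it exhibits an explicit affine bijection $\Psi_k:\Lambda_{X_{k+1}}\to\Lambda_{Y_k}$, $\mathbf x\mapsto M_{n_{k+1}}^T\mathbf x + (1,-1,0)$, whose order-preserving property gives the renormalization directly, with no inductive hypothesis invoked. Second, you follow the theorem as printed and aim at $\widehat T_{W_{k+1}}|_{Y_{k+1}} = \phi_k^{-1}\circ T_{W_k}\circ\phi_k$, but the direction established in the paper (and in the detailed restatement in Section 5.1) is the reverse: $\widehat T_{W_k}|_{Y_k} = \phi_k^{-1}\circ T_{W_{k+1}}\circ\phi_k$, i.e., the first return of stage $k$ is conjugate to stage $k+1$, with the cycle closing at $k=L$ because $W_L\xi_1 = \lambda_1\xi_1$ rescales back to $\xi_1$. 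Getting this orientation right is essential to seeing why the scheme is a closed $L$-cycle of renormalizations.
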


We conjecture that the closure of the set of renormalizable multistage REMs is topologically a Cantor set.

\subsection{Background}

A DEM is an example of a discrete dynamical system which is a piecewise affine isometry. These systems have applications to the study of substitutive dynamical systems, outer billiards, and digital filters. Originally J. Moser proposed studying outer billiards as a toy model for celestial dynamics. In much the same manner, DEMs provide a toy problem for the study of Hamiltonian dynamical systems with nonzero field. See \cite{g2003} for a nice survey including many open questions related to 2-dimensional piecewise isometries.

Although the maps we study are locally translations, the sharp discontinuities produce a dynamical system with extremely rich long-term behavior. This complexity can even be seen in the 1-dimensional case of interval exchange transformations (IETs). We wish to classify points in the domain by the long-term behavior of their orbits. The domain of an affine isometry is subdivided into tiles on which the map is locally constant. Each point in a piecewise isometry can be classified by the sequence of tiles visited by the forward orbit of a point. The most basic question is to give an encoding for each point in terms of this sequence. While this problem is particularly challenging, there has been some success in classifying points into sets of points whose orbits are eventually periodic and those whose orbits are not periodic. Such a classification has been carried out successfully in a few particular cases, \cite{akt2001}, \cite{g2003}, \cite{lkv2004}, \cite{ah2013}, \cite{h2013} and \cite{s2014}. 

In each case the authors used the principle of \emph{renormalization} to study the dynamical system. Renormalization provides a way to understand the long-term behavior of a discrete dynamical system. Unfortunately for piecewise isometries in dimension 2 or higher there are no general methods for developing a renormalization scheme for a dynamical system. In the 1-dimensional case of the IET, G. Rauzy developed a general technique known as \emph{Rauzy induction} for finding a renormalization scheme for an IET \cite{r1979}. His method does not generalize to higher dimensions.

REMs were first studied by Haller who gave a minimality condition \cite{h1981}. Unfortunately this condition is extremely difficult to check in practice. Finding a recurrent REM was included as question \#19 in a list of open problems in combinatorics at the \emph{Visions in Mathematics} conference \cite{g2000}. Hooper developed the first renormalization scheme for a family of REMs parametrized by the square \cite{h2013}.  In \cite{s2014} Schwartz used multigraphs to construct polytope exchange transformations (PETs) in every dimension. He developed a renormalization scheme for the simplest case in which the corresponding multigraphs are bigons. The renormalization map is a piecewise 
M\"obius map. 

The topological entropy of a dynamical system gives a numerical measure of its complexity. For a dynamical system defined on a compact topological space the topological entropy is an upper bound for the exponential growth rate of points whose orbits which remain a distance \(\epsilon\) apart as \(\epsilon \to 0\) \cite{t2014}. The topological entropy gives an upper bound on the metric entropy of the dynamical system. In \cite{b2001} J. Buzzi proved that the topological entropy is zero for piecewise isometries defined on a finite union of polytopes in \(\R^d\) which are actual isometries on the interior of each polytope. The REMs we study in this paper are examples of such systems and as a consequence have zero topological entropy. However when the domain is not a union of polytopes the techniques in \cite{b2001} must be modified. We expect that our technique for constructing domain exchange maps produces dynamical systems with zero topological entropy but have not proved this.

Throughout this paper we make extensive use of the connection between non-negative integer matrices and \emph{Perron numbers}. A Perron number is a positive real algebraic integer \(\lambda\) which is strictly larger than the absolute value of any of its Galois conjugates. In \cite{l1984} it was proven that for every Perron number \(\lambda\) there exists a non-negative integer matrix $M$ which is \emph{irreducible} (i.e. $M^k$ is positive for some power $k$) and has \(\lambda\) as a leading eigenvalue. 

In this paper we use algebraic properties of a subset of Perron numbers known as \emph{Pisot-Vijayaraghavan numbers} or \emph{PV numbers} to find REMs which are renormalizable. A PV number is a positive real algebraic integer whose Galois conjugates lie in the interior of the unit disk. We use cut-and-project sets associated to PV numbers to produce DEMs. Cut-and-project sets were introduced in
\cite{m1994} and further studied in \cite{l1996}.

Our proof of the renormalization schemes in this paper rely on algebraic properties of PV numbers. In two recent works monoids of matrices were discovered whose leading eigenvalues are PV numbers (\cite{ai2001} and \cite{ad2015}). The authors called these matrices \emph{Pisot matrices}. We find a new monoid of Pisot matrices with an infinite generating set.

The techniques we use in this paper are influenced by \cite{k1992} and \cite{k1996}. These works focused on self-similar tilings of the plane whose expansion constant is a complex Perron number. Unlike the tiles in our DEMs, the tiles in \cite{k1996} have a fractal boundary. Our construction of DEMs also share similarities with the Rauzy fractal \cite{r1982}.

\section{Constructing minimal DEMs with cut-and-project sets}\label{minimal-DEMs}

\subsection{Definition}
Let \(X\) be a smooth Jordan domain in \(\R^2\) and \(L\) a lattice in \(\R^3\) such that \(\Lambda = \Lambda(X,L)\) is a cut-and-project set: 
$\Lambda=\{p\in L|~ \pi_{xy}(p) \in X\}$. We construct a DEM on \(X\) by projecting a dynamical system on \( \Lambda \) onto the window \(X\). Projection onto the \(z\)-coordinate gives an ordering of the points in \(\Lambda\). Order the points in $\Lambda$ by increasing $z$-coordinate: $\Lambda=\{\dots,\mathbf x_{-1},\mathbf x_0,\mathbf x_1,\dots\}$. Let \(\widetilde{T}:\Lambda \to \Lambda\) be the dynamical system defined by
\begin{align*}
\widetilde{T}(\mathbf x_i) & = \mathbf x_{i+1}.
\end{align*}

Consider the set of steps in the lattice walk
\begin{align*}
\mathcal E= \{ \widetilde{T}(\mathbf x) -\mathbf x : \mathbf x \in \Lambda\}.
\end{align*}
Since $L$ is a lattice, \(\mathcal E\) is a finite set. Suppose there are \(N+1\) vectors in \(\mathcal E\) and label them by \(\mathcal E = \{ \eta_0, \eta_1,\ldots, \eta_N\}\). Projection onto the \(z\)-coordinate induces an order on \(\mathcal E\). We assume that \(\mathcal E\) is indexed so that
\[\pi_{z}(\eta_0)< \pi_{z}(\eta_1)< \dots < \pi_{z}(\eta_{N}).\]
Define \(  V= \{ v_i = \pi_{xy} (\eta) ~:~ \eta \in \mathcal E \}\). The DEM \(T:X\to X\) is defined by 
\[ T(p) =p+v_i  \text{ with } i = \min\{ 0, \ldots, N ~:~ v_j \in V \text{ and } p + v_j \in X\}. 
\]
Note that \(T\) is well-defined and bijective on \(X\). The map $T$ is a piecewise translation on $X$.

The DEM induces a partition of \(X\) into subdomains \(\{A_i\}_{i=0}^N\) for which \(T(p) = p + v_i\) for all \(p\in A_i\). Likewise \(T^{-1}\) induces a partition \(\{B_i\}_{i=0}^n\) for which \(T^{-1}(p) = p -v_i\) for all \(p\in B_i\). Note that 
\[ X = \bigcup_{k=0}^N A_k = \bigcup_{k=0}^N B_k\]
and \(A_k =B_k + v_k\), verifying that \(T\) is a DEM. The subdomains are not necessarily connected. However, each connected component of a subdomain is bounded by a smooth Jordan curve as long as \(X\) is a smooth Jordan domain. 

In Figure \ref{fig:3dpath} we show both the lattice walk \(\widetilde T\) and the resulting DEM \(T\).

\begin{figure}[h]
  \begin{center}
    \includegraphics[width=\textwidth]{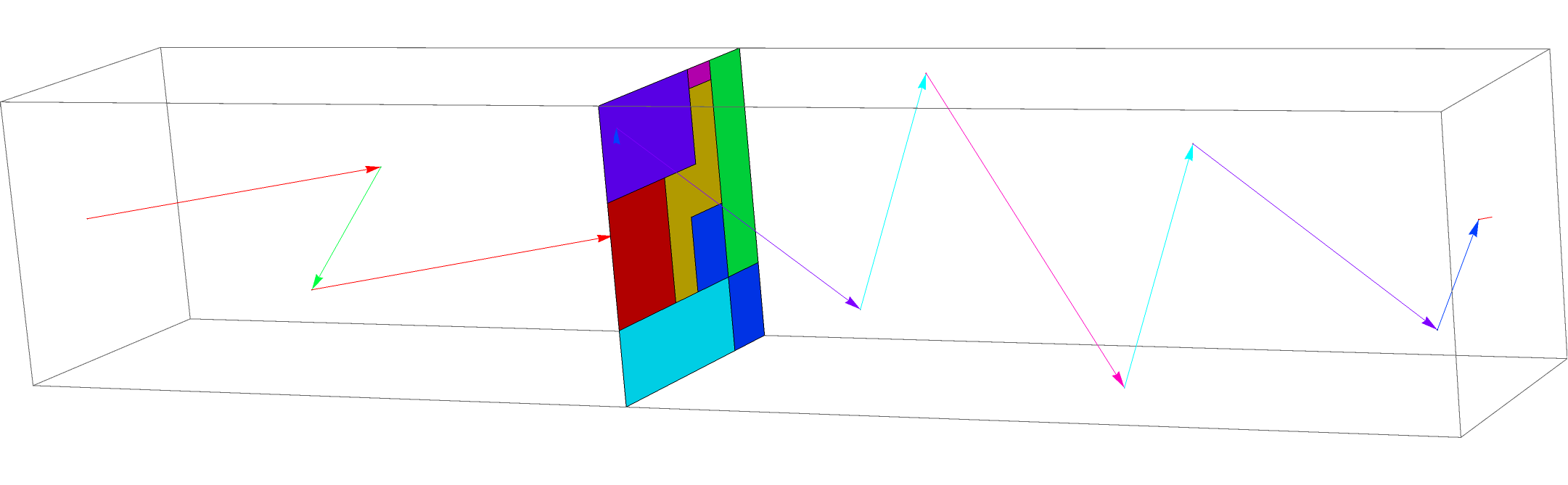}
\caption{Lattice walk in \(\Lambda(X,L)\) and the partition associated to the DEM on \( X\). Each colored region in the partition is translated by the projection of the step in the lattice walk, with the same color, onto the \(xy\)-plane.}
\label{fig:3dpath}
  \end{center}
\end{figure}

For a dynamical system $T: X \to X$, the \emph{orbit} of \(p\) is the set $O(p)=\{T^j(p)~|~j\in\Z\}$. We also define 
$O^{k+}(p)=\{T^j(p)| \ j\in \Z, 0 \leq j \leq k\}$ the \(k\)-th \textit{forward orbit} of $p$, and $O^+(p)=\{T^j(p)~|~j\ge0\}$ the forward orbit.

\subsection{Vertical flow}

Let $\T^3 = \R^3/L.$ We can consider $X$ as a subset of $\T^3$: the inclusion map $\i:X\to\T^3$ is injective by our conditions on $L$.
On $\T^3$ the \emph{vertical linear flow} is defined by $\Phi_t((x,y,z)) = (x,y,z+t)\bmod L$ for $t\in\R$.

By Weyl's Equidistribution Theorem (see e.g. \cite{Stein}), the vertical flow is equidistributed on $\T^3$ in the following sense. 
Take any open set $\Omega$
in the image of the $xy$-plane in $\T^3$, and a point $x\in\Omega$. 
The iterates of the first return map to $\Omega$ of the vertical flow, when applied to $x$, are equidistributed in $\Omega$. 

So to prove Theorem \ref{minimality} above, it suffices to establish the following result.

\begin{thm}
$T$ is conjugate to the first return map to $X$ of the vertical linear flow $\Phi$, that is
$\i(T(p)) = \Phi_\tau(\i(p))$ where
$$\tau = \inf\{t>0~|~ \Phi_t(\i(p))\in \i X\}.$$
\end{thm}

\begin{proof}
The vertical linear flow on $\T^3$ lifts to the vertical flow on $\R^3$. 
Consider all translates of $X\in\R^2\subset\R^3$ by lattice translations in $L$. 
Each of these intersects $X\times\R$ in some (possibly empty) subset.
Order those with nonempty intersections by their $z$-coordinate.
By construction the translates $\eta_0+X, \dots,\eta_N+X$ are the first $N+1$ such translates, and the projections
to $\R^2$ of these cover $X$. 
\end{proof}

\old{
\begin{proof}[Proof of Theorem \ref{minimality}]
Let \(p,q\in X\) with \(p\neq q \). We show that there exists a subsequence of points in $O^+(p)$ that converge to $q$. By assumption \(\pi_{xy}(\Lambda(X,L))\) is dense in \(X\). Define a subsequence \(\{ \mathbf x_n\}_{n\in \N}\) of the forward orbit of $\mathbf x_0$ under $\widetilde T$ with  \(\mathbf x_0\in \Lambda(X,L)\)  such that 
\[ \lim_{n \to\infty} \pi_{xy}(\mathbf x_{n}) = p.\]

When a point \(p'\) is close to \(p\) their respective orbits will stay close for some time. More precisely, we define
\[ \partial_k = \partial A_k \text{ and } \partial= \bigcup_{k=0}^n \partial_k.\] 
and the forward orbits of the points in the boundaries  
\[ 
D^t = \bigcup_{ r \in \partial} O^{t+}(r).
\]
The complement \(X\setminus D^t\) can be written as a finite union of \(m\) connected open sets 
 \[
 X \setminus D^t =\bigcup_{k=0}^m C_k^t. \quad 
 \]
 The set \(D^5\) is illustrated in figure \ref{fig:dem-orb}. All points in \(C^t_k\) have the same sequence of translation vectors for \(t\) steps. If two points \(p, p'\in C_k^t\) then 
\begin{align}\label{eq:tele} T^n(p)-T^{n-1}(p) = T^n(p')-T^{n-1}(p') \quad \forall n=0,1,2 \ldots, t-1.\end{align}
Since \(\pi_{xy} (O^+(\mathbf x_n))\) is dense in \(X\) there exists a sequence 
\[ \mathbf x_n, \widetilde T^{m_1}(\mathbf x_n),\widetilde T^{m_2}(\mathbf x_n), \ldots \]
such that 
\[ \lim_{m_i\to\infty} \pi_{xy} \circ \widetilde T^{m_i}(\mathbf x_n) =q.\]

There exists \(N\in \N\) such that for all \(n \geq N\), we have \( \| \pi_{xy}(\mathbf x_{n})-p  \|<\epsilon\) for every $\epsilon>0$. Moreover, for each $\mathbf x_{n}$ with $n>N$, there exists a large integer $M>0$ such that $\| \pi_{xy}\circ \widetilde{T}^{m_i}(\mathbf x_n) -q \|<\epsilon$ for all $m_i>M$. Thus there must exist an $\mathbf x_n \in \mathbf x$ with $n>N$ such that $\mathbf x_n \in C_k^{m_i}(p)$ for some $m_i>M$. Then we have
\begin{align*}
\| T^{m_i}(p) -q \| & \leq \| T^{m_i}(p)- \pi_{xy}\circ \widetilde{T}^{m_i}(\mathbf x_n) \| + \| \pi_{xy}\circ \widetilde{T}^{m_i}(\mathbf x_n) -q \| \\
& \leq \| T^{m_i}(p)- \pi_{xy}\circ \widetilde{T}^{m_i}(\mathbf x_n) \| +\epsilon\\ 
& = \| p - \pi_{xy}(\mathbf x_n)\| +\epsilon\\ 
& \leq 2\epsilon.
\end{align*}
\end{proof}

\begin{figure}
  \begin{center}
    \includegraphics[scale=1]{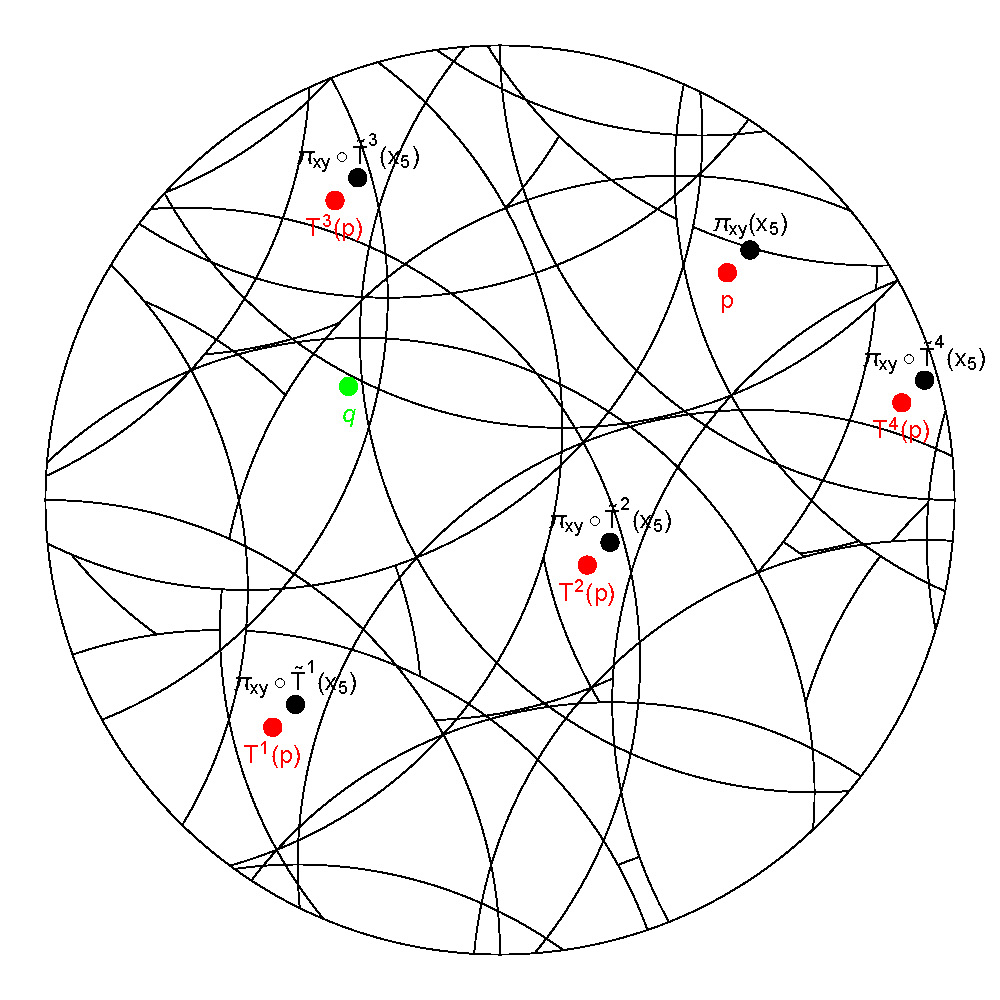}
\caption{Forward Orbit of Boundaries after 5 steps}
\label{fig:dem-orb}
  \end{center}
\end{figure}
}

\subsection{PV REMs}\label{sec:cons}

\begin{figure}[h]
\centering
\includegraphics[scale=1]{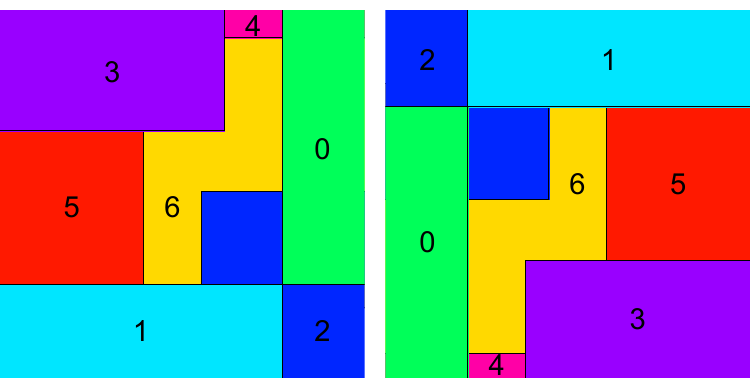}\\
\caption{The two partitions associated to the REM \(T_{M_6}\)}
\label{fig:M6}
\end{figure}

We explain here the details of the REM construction when \(X=[0,1]\times[0,1]\) and \(L\) is the Galois embedding of \(\Z[\lambda]\) 
where \(\lambda\) is a certain family of PV numbers. 
Define for each $n\ge 6$ a polynomial
\[
q_n(x)=x^3-(n+1)x^2+nx-1.
\]

\begin{lemma}\label{lem:roots}
The polynomial \(q_n\) has three real roots, $\lambda_1, \lambda_2$ and $\lambda_3$, which satisfy the inequalities $0<\lambda_1<\lambda_2<1<\lambda_3$. 
\end{lemma}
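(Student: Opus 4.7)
The plan is to apply the intermediate value theorem to $q_n$ on three subintervals whose endpoints are chosen so that the signs of $q_n$ alternate. The natural breakpoints are $0$, $1/2$, $1$, and $+\infty$.

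First I would evaluate $q_n$ at the easy boundary points. Direct substitution gives $q_n(0) = -1$ and $q_n(1) = 1 - (n+1) + n - 1 = -1$, both strictly negative. Since $q_n$ is monic of degree three, $q_n(x) \to +\infty$ as $x \to +\infty$, so the sign change on $(1,\infty)$ already produces one real root $\lambda_3 > 1$.

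The crux is producing a point in $(0,1)$ where $q_n$ is positive, which is where the hypothesis $n \geq 6$ enters. A short computation at the midpoint $x = 1/2$ gives
\[
q_n(1/2) = \tfrac{1}{8} - \tfrac{n+1}{4} + \tfrac{n}{2} - 1 = \tfrac{n}{4} - \tfrac{9}{8},
\]
which is $\geq 3/8 > 0$ for every $n \geq 6$. The intermediate value theorem applied to the intervals $(0, 1/2)$ and $(1/2, 1)$ then yields two further real roots $\lambda_1 \in (0,1/2)$ and $\lambda_2 \in (1/2, 1)$, with $\lambda_1 < \lambda_2$ automatically since they lie in disjoint intervals.

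To finish, I would invoke the fact that a cubic polynomial has at most three roots in $\C$: the three reals located above must exhaust the root set, and they satisfy $0 < \lambda_1 < \lambda_2 < 1 < \lambda_3$ strictly (no equality can occur because $q_n$ is nonzero at $0$, $1/2$, and $1$). The only substantive step is guessing the right test point inside $(0,1)$; $x=1/2$ works uniformly for $n \geq 6$, and this is really the only place the lower bound on $n$ is used. Everything else is routine sign-tracking, so I do not expect any serious obstacle.
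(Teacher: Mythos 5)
Your proof is correct, and it takes a cleaner route than the paper's. The paper first computes the discriminant $D(n)=n^4-6n^3+7n^2+6n-31$ to establish that $q_n$ has three distinct real roots for $n\geq 6$, then uses Vieta's relations ($\lambda_1\lambda_2\lambda_3=1$ and $\lambda_1+\lambda_2+\lambda_3=n+1$) together with the signs $q_n(0)=q_n(1)=-1$ to locate the roots. Your approach skips the discriminant entirely: the single well-chosen test point $x=1/2$, where $q_n(1/2)=n/4-9/8>0$ for $n\geq 6$, combined with $q_n(0)<0$, $q_n(1)<0$, and $q_n(x)\to+\infty$, hands you three sign changes and hence three distinct real roots in the required intervals in one pass via the intermediate value theorem. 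This is more elementary and self-contained (no discriminant formula to trust and no separate argument needed for distinctness or positivity of $\lambda_1$), at the modest cost of having to guess a uniformly good interior test point. Note also that your proof actually establishes the slightly sharper localization $\lambda_1<1/2<\lambda_2$, which the paper's argument does not directly give at this stage (it only obtains finer bounds later in Lemma~\ref{lem:monotone}).
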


\begin{proof}
The discriminant of $q_n$ is
\[ D(n) = n^4-6 n^3+7 n^2+6 n-31.\]
It has two real roots  \(n=1/2 (3 + \sqrt{13 + 16 \sqrt{2}})\) and \(1/2 (3 - \sqrt{13 + 16 \sqrt{2}})\). Thus for \(n\geq 6\) the discriminant is strictly positive and we find \(q_n\) has three distinct real roots. 

Since 
\[
\lambda_1 \lambda_2 \lambda_3=1 \quad \mbox{and} \quad \lambda_1+\lambda_2+\lambda_3=n+1
\]
it follows that  \(\lambda_3>1\) and \(\lambda_1<1\). However, \(\lambda_3<n+1\) and so \(\lambda_1+\lambda_2>0\). This implies \(\lambda_2>0\). The product of the three roots is one which implies that \(\lambda_1>0\). 

It remains to show that \(\lambda_2<1\). Evaluating \(q_n\) and its derivative at \(0\) and \(1\) gives
\[
q_n(0)=-1, \quad q_n'(0)=n, \quad q_n(1)=-1 \quad \text{and} \quad q_n'(1)=1-n . 
\]
We find that $q_n(x)$ has two roots between $0$ and $1$ and conclude that $0<\lambda_1<\lambda_2<1$.
\end{proof}

Note that \(q_n\) is the characteristic polynomial of the matrix
\[ M_n = \begin{bmatrix} 0 & 1 & 0 \\ 0 & 0 & 1 \\ 1 & -n & n+1\end{bmatrix}.\]
Let \(T_{M_n}:X \to X\) be the PV REM associated to the Galois embedding of the roots of \(q_n\). The two partitions associated to the REM \(T_{M_6}\) are shown in figure \ref{fig:M6}. When \(L\) has this form there are seven possible steps in the lattice walk \(\mathcal E_n\). It is convenient to identify points in \(L\) by their representation in \(\Z^3\), i.e., if \((a,b,c) \in \Z^3\) then 
\[ \pi_{xy}(a,b,c) = (a+b \lambda_1 + c \lambda_1^2,a+b \lambda_2 + c \lambda_2^2) \text{ and } \pi_z(a,b,c) = a+b\lambda_3+c\lambda_3^2.\]
Using this representation the vectors in \(\mathcal E_n\) are
\begin{align}\label{lattice-walk-vectors}
\begin{aligned}
\eta_0 & =(-1,1,0), \quad \eta_1=(0,1,0), \quad \eta_2=\eta_0+\eta_1=(-1,2,0) \\ 
\eta_3 & =(1,-3,1), \quad \eta_4=\eta_0+\eta_3=(0,-2,1), \\
\eta_5 & =\eta_1+\eta_3=(1,-2,1), \quad \mbox{and} \quad \eta_6  =\eta_0+\eta_1+\eta_3=(0,-1,1). 
\end{aligned}
\end{align}
Theorem \ref{thm:comb} establishes that the steps in the lattice walk are independent of \(n\) and as a consequence we set \(\mathcal E_n=\mathcal E\).

The partition associated to the REM \(T_{M_n}\) is constructed as follows. A visual depiction of the construction is shown in figure \ref{fig:par}. Define the projections onto the \(xy\)-plane of the translation vectors in \(\mathcal E\) by
\[ V_n= \{ v_i = \pi_{xy}(\eta_i), \text{ for } i = 0, 1, \ldots 6\}.\]
Note that \(V_n\) depends on \(n\) since the projection \(\pi_{xy}\) is a function of the roots of \(q_n\).

For a vector \(v\in \R^2\) let \(f_v\) be the translation \(f_v(x)= x+v\) for \(x\in \R^2\). We define the partition \(\mathcal A = \{A_k\}_{k=0}^N \) of \(X\) associated to \(T_{M_n}\) inductively as follows:
\begin{equation}
A_0 = f^{-1}_{v_0}(X) \cap X \quad \mbox{and} \quad  A_k=(f^{-1}_{v_k}(X) \cap X ) \setminus \bigcup_{j=0}^{k-1} A_j \quad \text{for} \quad k >0. 
\label{const}
\end{equation}
For a point \(x\) in the interior of a tile in the partition \(A_k\) the dynamical system is defined by
\[
T_{M_n}|_{A^\circ_k}(x)=f_{v_k}(x)= x+ v_k.
\]

\begin{figure}[h]
\begin{center}
\includegraphics[scale=0.8]{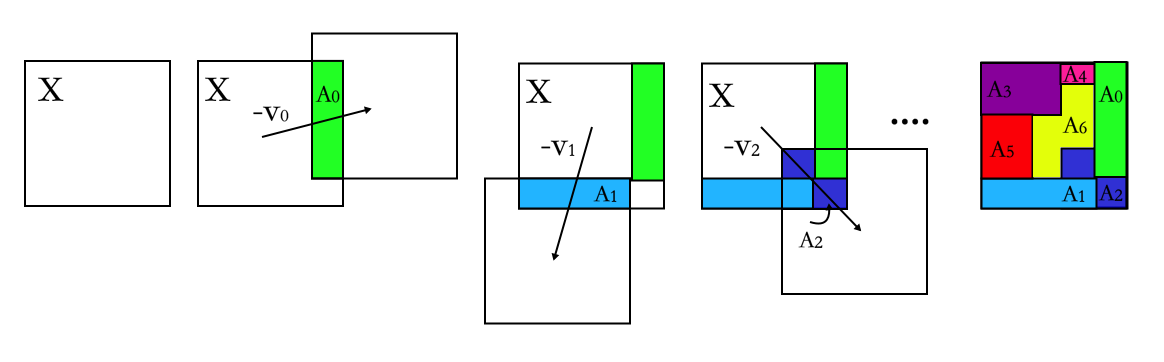}
\caption{The steps in the construction of the partition $\mathcal A$ associated to the REM \(T_{M_n}\) and the resulting partition.}
\label{fig:par}
\end{center}
\end{figure}

Each tile in the partition \(\mathcal A\) is a rectilinear polygon (refer to the example in Figure \ref{fig:M6}) and can be written as a disjoint union of rectangles. We use the standard notation for a rectangle
\[  [a,b] \times [c,d] = \{ (x,y) \in \R^2 : a \leq x \leq b \text{ and } c \leq y \leq d\}.\]
Recall that \(\lambda_1\) and \(\lambda_2\) are roots of the polynomial \(q_n(x)\) with \(0< \lambda_1< \lambda_2<1\). 
The tiles are as follows
\begin{align}\label{eq:partition}
\begin{aligned}
 A_0 &=[1- \lambda_1,1] \times [1-\lambda_2,1] \\
A_1 &= [0, 1- \lambda_1]\times [0, 1- \lambda_2]  \\
A_2 &= \left([1-2\lambda_1,1-\lambda_1] \times[1-\lambda_2,2-2\lambda_2] \right) \cup \left( [1-\lambda_1,1]\times[0,1-\lambda_2] \right)  \\
A_3 &=[0,3\lambda_1-\lambda_1^2] \times [-1+3 \lambda_2-\lambda_2^2,1] \\
A_4& = [3\lambda_1-\lambda_1^2, 1-\lambda_1] \times [2\lambda_2-\lambda_2^2,1] \\
A_5&=[0, 2\lambda_1-\lambda_1^2] \times[ 1- \lambda_2, -1+3\lambda_2-\lambda_2^2 ] \\
A_6 &=\big ( [1-2\lambda_1,3\lambda_1-\lambda_1^2] \times[2-2\lambda_2,-1+3\lambda_2-\lambda_2^2] \big) \\
& \cup \big ( [2\lambda_1 - \lambda_1^2, 1-2 \lambda_1]\times[1-\lambda_2,-1+3 \lambda_2-\lambda_2^2] \big )\\
&\cup\big( [3\lambda_1-\lambda_1^2,1-\lambda_1]\times[2-2\lambda_2,2\lambda_2-\lambda_2^2] \big) .
\end{aligned}
\end{align}


\section{Analysis of the PV REM \(T_{M_6}\) and its Renormalization}
Before analyzing the general case, we give a detailed description of the PV REM \(T_{M_6}\) in which the Galois lattice \(L_{\lambda}\) is determined by the polynomial \(q_6(x)=x^3-7x^2+6x-1\). 

Let $V=\{v_i\}_{i=0}^6$ be the set of translation vectors of  the REM $T_{M_6}$ where $v_i=\pi_{xy}(\eta_i)$ for $\eta_i \in \mathcal E$ listed in Lemma \ref{lem:comb6}. We obtain the REM $T_{M_6}: X \to X$ defined on the partition $\{A_i\}_{i=0}^{6}$ as shown in Figure \ref{fig:M6}.

\begin{lemma}{\label{lem:comb6}}
Let $\mathcal E=\{\eta_i\}_{0}^6$ where the $\eta_i$ are defined in (\ref{lattice-walk-vectors}).
The set of translation vectors of $T_{M_n}$ are $\{\pi_{xy}(\eta_i)\}_{i=1}^6$ for $n= 6$.
\end{lemma}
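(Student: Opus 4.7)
The plan is to verify directly that for $n=6$ the seven vectors $\eta_0,\dots,\eta_6$ listed in (\ref{lattice-walk-vectors}) are exactly the elements of the step set $\mathcal E$. A lattice vector $\eta\in\Z[\lambda_3]$ lies in $\mathcal E$ precisely when there exists $\mathbf x\in\Lambda(X,L_\lambda)$ with $\widetilde T(\mathbf x)=\mathbf x+\eta$; equivalently, when some $p\in X$ satisfies $p+\pi_{xy}(\eta)\in X$ while no lattice vector $\eta'$ with $0<\pi_z(\eta')<\pi_z(\eta)$ also satisfies $p+\pi_{xy}(\eta')\in X$. Thus the lemma reduces to two claims: (a) each listed $\eta_i$ is realized at some $p\in X$; and (b) no unlisted lattice vector is.

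For (a), I would first use the estimate $\lambda_3\approx 6.03$ for the leading root of $q_6$ to confirm $\pi_z(\eta_0)<\pi_z(\eta_1)<\cdots<\pi_z(\eta_6)$, so that the indexing in (\ref{lattice-walk-vectors}) matches the one used by the inductive construction (\ref{const}). Then, substituting the numerical values of $\lambda_1,\lambda_2\in(0,1)$ from Lemma \ref{lem:roots} into the closed-form descriptions of the tiles in (\ref{eq:partition}), I would check that each $A_i$ has strictly positive area, using elementary inequalities among $\lambda_1,\lambda_2$ and expressions such as $2\lambda_j-\lambda_j^2$, $3\lambda_j-\lambda_j^2$, and $-1+3\lambda_j-\lambda_j^2$ that follow from $q_6(\lambda_j)=0$. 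A nonempty $A_i$ certifies that $v_i=\pi_{xy}(\eta_i)$ is actually taken as a step at some point of $X$.

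For (b), I would establish completeness by showing that $\bigcup_{i=0}^6 A_i=X$. Once coverage is verified, rule (\ref{const}) is already exhausted at every $p\in X$ by one of the seven listed translation vectors, so no additional step can appear in the walk. Coverage is checked directly from (\ref{eq:partition}) by slicing $X$ along the vertical cuts $x\in\{2\lambda_1-\lambda_1^2,\ 1-2\lambda_1,\ 3\lambda_1-\lambda_1^2,\ 1-\lambda_1\}$ and the analogous horizontal cuts, then verifying cell by cell that each rectangular piece lies inside some $A_i$, again invoking $q_6(\lambda_j)=0$ to reduce polynomial inequalities to numerical ones.

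The main obstacle is this coverage argument: it is routine but fiddly, and one must be careful that the vertical and horizontal cut values are correctly ordered so that the cell decomposition is well-defined. A cleaner alternative to (b) is to use the \emph{a priori} bounds $\pi_{xy}(\eta)\in[-1,1]^2$ and $0<\pi_z(\eta)\le\pi_z(\eta_6)$ to reduce candidate steps to finitely many integer triples $(a,b,c)\in\Z^3$, and then verify that every extra candidate is always superseded by some $\eta_j$ of smaller $\pi_z$. Either route reduces the lemma to a finite computation, but the direct coverage argument seems preferable since the explicit partition is already provided by (\ref{eq:partition}).
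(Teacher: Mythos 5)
Your reduction to claims (a) and (b), and the characterization of $\mathcal E$ in terms of the smallest available $\pi_z$-shift at each $p$, are correct. But the route you prefer has a gap in (a). Nonemptiness of the tile $A_i$ from (\ref{eq:partition}) does not by itself certify that $\eta_i$ is a step: $A_i$ is carved out by the rule (\ref{const}) using only the \emph{listed} vectors $\eta_0,\dots,\eta_{i-1}$, so a point $p\in A_i$ is known to have $p+v_j\notin X$ for $j<i$, but nothing prevents an \emph{unlisted} lattice vector $\eta'$ with $0<\pi_z(\eta')<\pi_z(\eta_i)$ and $\pi_{xy}(\eta')\in(-1,1)^2$ from also satisfying $p+\pi_{xy}(\eta')\in X$. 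If such an interleaved $\eta'$ existed, the lattice walk would step by $\eta'$ there, the DEM would not act by $v_i$ on $A_i$, and (\ref{eq:partition}) would simply fail to describe the actual map. Your coverage argument for (b) only excludes steps with $\pi_z>\pi_z(\eta_6)$; it is silent about interleaved competitors. Note also that the bound $\pi_z(\eta)\le\pi_z(\eta_6)$ you invoke in the ``cleaner alternative'' is not genuinely \emph{a priori}: it becomes a bound on steps only once coverage is known, so enumeration and coverage are complementary halves of the argument, not interchangeable substitutes.

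The missing ingredient is exactly the finite enumeration you sketch and then set aside, and it is what the paper's proof supplies. Writing $\xi_i=(1,\lambda_i,\lambda_i^2)$, the paper forms the open polytope in $\R^3$ cut out by $-1<v\cdot\xi_1<1$, $-1<v\cdot\xi_2<1$, $0<v\cdot\xi_3<31$ (the last bound chosen just above $\pi_z(\eta_6)=\lambda_3(\lambda_3-1)\approx 30.4$ for $n=6$) and checks by direct computation that it contains \emph{exactly} seven integer points, namely $\eta_0,\dots,\eta_6$. With that completeness established, a nonempty $A_i$ really does force $\eta_i\in\mathcal E$ (any smaller-$\pi_z$ competitor with in-range projection would have appeared in the list), and coverage $\bigcup_{i=0}^6 A_i=X$ rules out any step with $\pi_z>\pi_z(\eta_6)$. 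So the enumeration should be promoted from an optional alternative to a required first step of the proof.
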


\begin{proof}
The characteristic polynomial of the matrix 
\[
M_6=\begin{bmatrix}
0 & 1 & 0 \\
0 & 0 & 1 \\ 
1 & -6 & 7
\end{bmatrix}.
\]
is $q_6(x)=x^3-7x^2+6x-1$. By Lemma \ref{lem:roots}, the polynomial $q_n(x)$ has three roots $\lambda_1, \lambda_2$ and $\lambda_3$ with $0<\lambda_1<\lambda_2<1<\lambda_3$. The eigenvector $\xi_i$ of $M_6$ associated to $\lambda_i$ is $(1, \lambda_i, \lambda^2_i)$ for $i=1,2$ and $3$. 

By direct computation, we find that the seven vectors \(\eta_0, \eta_1, \ldots, \eta_6\) are the seven solutions for vectors in $\Z^3$ of the following inequalities
\begin{align*}
-1&< v \cdot \xi_1 <1 \\
-1&< v \cdot \xi_2 <1 \\
0&< v \cdot \xi_3  <31.
\end{align*}
The first two equations ensure that the projection of each step of the lattice walk in $\Z^3$ is a translation vector in the REM. The third equation ensures that these are the first seven vectors in \(\mathcal E\) which define a partition of the unit square. The set of real solutions to the above inequalities is a convex polytope in \(\R^3\) which contains exactly seven integer points. Each solution corresponds to a permissible step in the lattice walk on \(\Lambda_X\). 
\end{proof}

\begin{thm}\label{thm:M6}
Let $Y=A_0$. The first return map $\hat T_{M_6}|_{Y}$ to the set $Y$ is conjugate to $T_{M_6}$ by the affine map $\psi: Y \to X$ given by
\[
\phi_n(x,y)=\left(\frac{x+\lambda_1-1}{\lambda_1}, \frac{y+\lambda_2-1}{\lambda_2}\right).
\]
where $0<\lambda_1<\lambda_2<1$ are the smaller eigenvalues of the matrix $M_6$. 
\end{thm}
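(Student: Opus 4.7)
The plan is to prove the identity $\widehat T_{M_6}|_Y = \phi^{-1} \circ T_{M_6} \circ \phi$ by lifting it to the lattice walk $\widetilde T$ on $\Lambda(X,L)$ from Section \ref{minimal-DEMs}. The crucial algebraic fact is that $\lambda := \lambda_3$ is a unit in $\Z[\lambda]$, since the constant term of $q_6(x) = x^3-7x^2+6x-1$ is $-1$. Hence $\lambda^{-1}$ lies in $\Z[\lambda]$, multiplication by $\lambda^{-1}$ is a $\Z$-module automorphism of the lattice $L$, and in the Galois embedding it acts as the diagonal operator $\mathrm{diag}(\lambda_1^{-1}, \lambda_2^{-1}, \lambda_3^{-1})$. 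This is exactly the linear part of $\phi$ on the $xy$-plane (with $\phi$ contributing the translation by $(1-\lambda_1^{-1}, 1-\lambda_2^{-1})$).

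I would define the lift
\[
\Phi: L \to L, \qquad \Phi(q) = \frac{q+\lambda-1}{\lambda} = \lambda^{-1}q + (1-\lambda^{-1}),
\]
which is well-defined and bijective because multiplication by the unit $\lambda^{-1}$ is a bijection of $L$. A direct calculation using the Galois embedding gives
\[
\pi_{xy}(\Phi(q)) = \phi(\pi_{xy}(q)), \qquad \pi_z(\Phi(q)) = \frac{\pi_z(q)}{\lambda_3} + 1 - \frac{1}{\lambda_3}.
\]
The first identity, together with the fact that $\phi$ is an affine bijection $Y \to X$, shows that $\Phi$ restricts to a bijection $\Lambda(Y,L) \to \Lambda(X,L)$. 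The second identity shows that $\Phi$ strictly preserves the $z$-ordering of lattice points, because $\lambda_3 > 1 > 0$.

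Now the theorem reduces to an order-theoretic observation. The map $\widetilde T$ on $\Lambda(X,L)$ is by construction the successor map in the $z$-ordering, so its first-return map to the subset $\Lambda(Y,L)$ is the successor map in the $z$-ordering restricted to $\Lambda(Y,L)$. Because $\Phi$ is an order-preserving bijection between these two ordered sets, it intertwines the first-return lattice walk on $\Lambda(Y,L)$ with the full walk $\widetilde T$ on $\Lambda(X,L)$. Applying $\pi_{xy}$ and using $\pi_{xy}\circ\Phi = \phi\circ\pi_{xy}$ yields $\phi \circ \widehat T_{M_6}|_Y = T_{M_6} \circ \phi$ on the dense subset $\pi_{xy}(\Lambda(Y,L)) \subset Y$. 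Since both sides are piecewise translations, agreement on this dense set forces global agreement on each tile of the common refinement of their domains of continuity.

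The only real subtlety is conceptual rather than computational: one must check that the first-return map of the lattice successor to $\Lambda(Y,L)$ projects precisely to the first-return map $\widehat T_{M_6}|_Y$, which is immediate from $\Lambda(Y,L) = \pi_{xy}^{-1}(Y)\cap L$ and Poincar\'e recurrence (equivalently, the minimality established in Theorem \ref{minimality}). The algebraic heart of the argument is the unit property of $\lambda$: without it, $\Phi$ would send $L$ to a proper sublattice and the restricted lattice walk on $\Lambda(Y,L)$ would fail to be in bijection with $\widetilde T$ on $\Lambda(X,L)$.
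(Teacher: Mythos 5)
Your proof is correct, and it is essentially the same as the paper's proof of the general Theorem~\ref{thm:single} in Section~\ref{prf:single}, which the paper explicitly invokes as implying this $n=6$ case. There the authors define the affine lattice map $\Psi(\mathbf{x}) = (M_n)^T\mathbf{x} + (1,-1,0)^T$, which in the $i$-th Galois coordinate acts as $p \mapsto \lambda_i p + (1-\lambda_i)$ --- this is exactly your $\Phi$ run in the opposite direction (they map $\Lambda_X \to \Lambda_Y$, you map $\Lambda_Y \to \Lambda_X$). Both arguments hinge on the same two facts you isolate: $\lambda$ being a unit makes the map a bijection of $L$, and $\lambda_3>0$ makes it preserve the $z$-ordering of the lattice walk, so that the successor map on $\Lambda_Y$ is conjugate to the full successor map on $\Lambda_X$. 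Your framing in terms of $\Z[\lambda]$-multiplication rather than the transpose matrix makes the unit condition more transparent, which is a small expository gain.

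However, the proof the paper designates specifically for Theorem~\ref{thm:M6} is the one in the Appendix, and that proof takes a genuinely different route: a brute-force verification. It explicitly enumerates the symbolic return words for each piece $\hat A_k = \phi^{-1}(A_k)$ of the induced partition (for example $\hat A_0$ splits into cylinders $\mathcal R_{05}, \mathcal R_{013}, \mathcal R_{031}$; $\hat A_6$ into 19 cylinders), sums the corresponding $\eta_i$'s, and checks directly that each resulting translation vector equals $\rho(v_k)$ where $\rho = \mathrm{diag}(\lambda_1,\lambda_2)$. Your conceptual argument buys generality and explains \emph{why} the scheme works (the unit $\lambda$ is the whole story), but it does not produce the explicit return-word partition of $Y$ that the Appendix computation delivers and that Figure~\ref{fig:return6} depicts; the computational proof buys that concrete data at the cost of being specific to $n=6$. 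Both are valid; you have chosen the more scalable one.
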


Theorem \ref{thm:M6} is a particular case of Theorem \ref{thm:single} whose proof is given in Section \ref{prf:single}. In the Appendix we give a computational proof of Theorem \ref{thm:M6} and a symbolic encoding of the partition of $Y$ induced by the first return map $\hat T_{M_6}|_Y$. 

\begin{figure}[h]
\centering
\includegraphics[width=0.48\textwidth]{M6}
\includegraphics[width=0.48\textwidth]{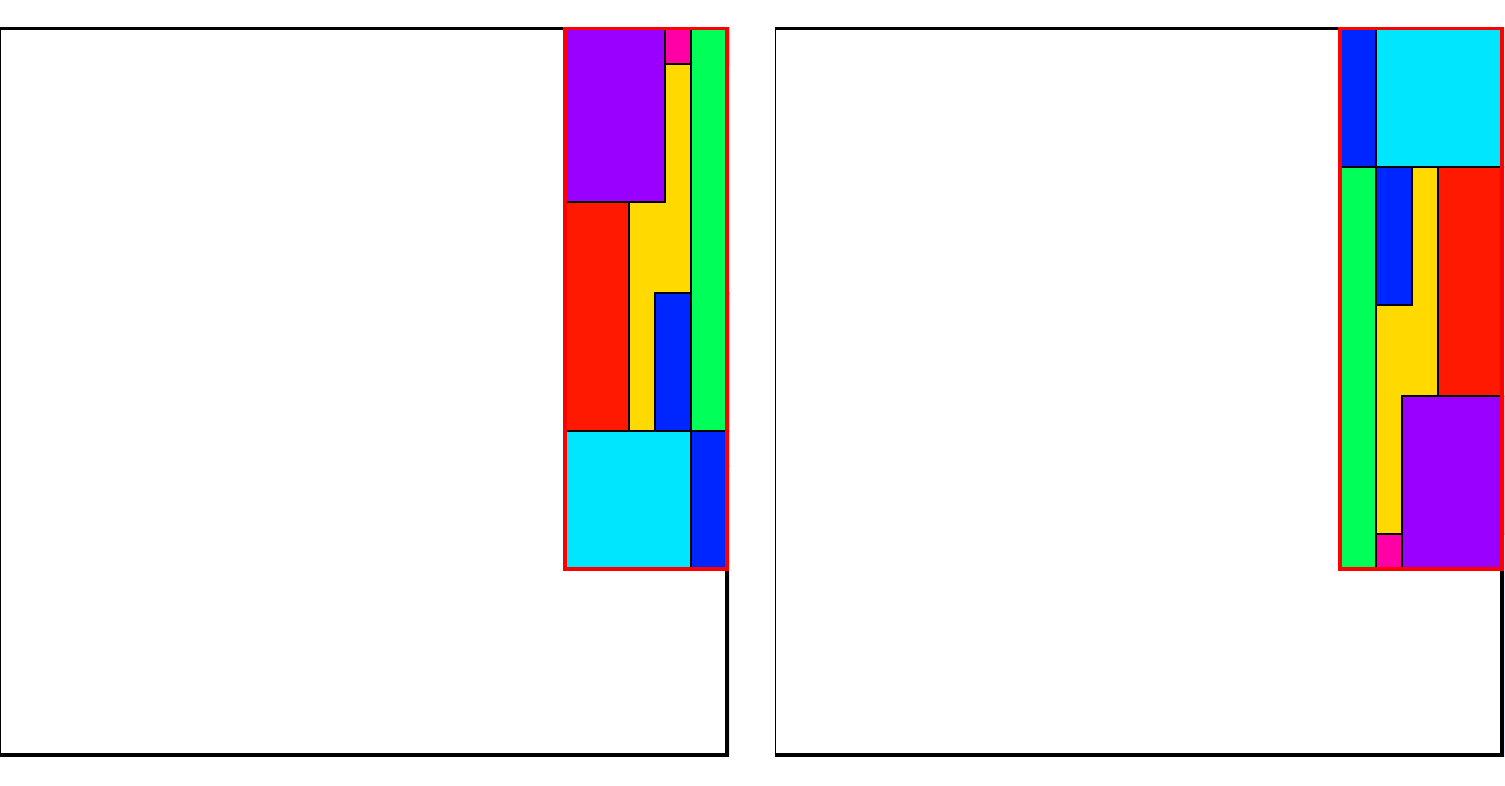}
\caption{The REM \(T_{M_6}\) and the partition induced by the first return map $\hat T_{M_6}|_{Y}$  to \(Y=A_0\).}
\label{fig:first6}
\end{figure}

\section{The renormalization scheme for PV REMs}
\subsection{Analyzing the lattice walk for \(M_n\in\S\)}

Let \(T_{M_n}\) be the PV REM constructed from a matrix \(M_n\in \S\) using the method outlined in section \ref{sec:cons}. 
Let \(L=L_{\lambda}\) be the associated Galois lattice. In this section, we analyze the dynamical system \(\widetilde{T}\) on \(L\) and prove 
\begin{thm}\label{thm:comb}
Lemma \ref{lem:comb6} holds for all $n \geq 6$.
\end{thm}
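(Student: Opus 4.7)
The plan is to generalize the direct enumeration in Lemma \ref{lem:comb6} from $n=6$ to all $n\ge 6$. The seven vectors $\eta_0,\ldots,\eta_6\in\Z^3$ in (\ref{lattice-walk-vectors}) are fixed integer vectors independent of $n$; only their projections $\pi_{xy}(\eta_i)$ and $\pi_z(\eta_i)$ depend on $n$, through the three roots of $q_n$. I would proceed in three steps.

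First I would verify that each $\pi_{xy}(\eta_i)\in(-1,1)^2$ for every $n\ge 6$. These are one-variable polynomial inequalities in $\lambda_1$ or $\lambda_2$ on the interval $(0,1)$ guaranteed by Lemma \ref{lem:roots}. For example, $\eta_3=(1,-3,1)$ requires $|1-3\lambda_j+\lambda_j^2|<1$ for $j=1,2$; since $t\mapsto t^2-3t+1$ decreases strictly from $1$ to $-1$ on $[0,1]$, the inequality holds strictly on the open interval $(0,1)$. The other six vectors are handled analogously (several cases, such as $\eta_1=(0,1,0)$, are immediate since $\pi_{xy}(\eta_1)_j=\lambda_j\in(0,1)$).

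Second I would verify that the rectilinear partition (\ref{eq:partition}) is a valid partition of $X=[0,1]^2$ for all $n\ge 6$, and that the translates $\{A_k+v_k\}$ also partition $X$. This reduces to a finite list of polynomial inequalities in $\lambda_1,\lambda_2$, such as $1-2\lambda_1>0$, $3\lambda_1-\lambda_1^2<1-\lambda_1$, and $2-2\lambda_2<-1+3\lambda_2-\lambda_2^2$. To verify these uniformly in $n$, I would sharpen the location of the roots: the calculation $q_n(1/2)=(2n-9)/8>0$ for $n\ge 6$, combined with $q_n(0)=q_n(1)=-1<0$, forces $\lambda_1\in(0,1/2)$ and $\lambda_2\in(1/2,1)$; the implicit function theorem applied to $q_n(\lambda_j)=0$, using $\partial_n q_n(\lambda)=\lambda(1-\lambda)>0$ on $(0,1)$ and the signs of $q_n'(\lambda_j)$, shows that $\lambda_1$ is decreasing and $\lambda_2$ is increasing in $n$. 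Consequently each inequality need only be checked at its worst case $n=6$, which is Lemma \ref{lem:comb6}.

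Third, once the partition is established, any integer lattice vector $v\notin\{\eta_i\}_{i=0}^6$ with $\pi_{xy}(v)\in(-1,1)^2$ and $\pi_z(v)>0$ has an empty associated tile, since every point of $X$ is already assigned to one of the $A_k$; hence it cannot be a translation vector, and $\mathcal E_n=\{\eta_i\}_{i=0}^6$. The main obstacle is the bookkeeping of polynomial inequalities in the second step: although each is elementary, several become tight as $n\to\infty$ (because $\lambda_1\to 0$ and $\lambda_2\to 1$), so the monotonicity argument must be combined with explicit checks at $n=6$ to guarantee strictness.
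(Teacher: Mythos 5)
Your Steps 1 and 2 are sound in outline (they roughly parallel what the paper does via Lemma~\ref{lem:monotone}), but Step 3 contains a genuine logical gap. The conclusion ``once the partition is established, any $v\notin\{\eta_i\}$ with $\pi_{xy}(v)\in(-1,1)^2$ and $\pi_z(v)>0$ has an empty associated tile'' is only valid for vectors $v$ with $\pi_z(v)>\pi_z(\eta_6)$. The greedy construction (\ref{const}) assigns tiles in strictly increasing order of $\pi_z$, so a putative ``interloper'' $v$ with, say, $\pi_z(\eta_3)<\pi_z(v)<\pi_z(\eta_4)$ would claim its tile $f^{-1}_{\pi_{xy}(v)}(X)\cap X\setminus(A_0\cup\dots\cup A_3)$ \emph{before} $\eta_4,\eta_5,\eta_6$ are considered; the fact that $A_0\cup\dots\cup A_6$ covers $X$ when the seven vectors are listed in their internal order does not show this set is empty. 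In other words, the partition (\ref{eq:partition}) is the correct one for the lattice walk only after one knows that $\eta_0,\dots,\eta_6$ are \emph{all} the lattice vectors with $\pi_{xy}$-projection in $(-1,1)^2$ and $\pi_z$ in $(0,\pi_z(\eta_6)]$; verifying that (\ref{eq:partition}) covers $X$ does not by itself establish this.

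This missing ingredient is precisely what the paper's Lemma~\ref{lem:step} supplies: by a seven-case analysis it shows that any lattice vector with $\pi_{xy}$-projection in $(-1,1)^2$ and positive $\pi_z$ must be a nonnegative integer combination of $\eta_0,\eta_1,\eta_3$, and among such combinations the seven listed are the $\pi_z$-smallest. With that in hand, there are no interlopers, and the argument that the partition covers $X$ (which you do address) finishes the proof. To repair your proposal you would need an analogue of Lemma~\ref{lem:step}, i.e., an enumeration showing that no other integer vector with small enough $\pi_z$ has projection in $(-1,1)^2$ uniformly in $n\ge 6$; this is the hard, $n$-dependent part of the theorem and cannot be replaced by the inclusion--exclusion bookkeeping on the rectangles alone.
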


There are a number of steps in the proof. The first step is proving a more refined version of Lemma \ref{lem:roots}.
\begin{lemma}\label{lem:monotone}
Label the roots of $q_n$ \(\lambda_1^n,\lambda_2^n,\lambda_3^n\) with \(0<\lambda_1^n<\lambda_2^n<1<\lambda_3^n\). Then \(\lambda_2^n\) and \(\lambda_3^n\) are monotonically increasing functions of \(n\) while \(\lambda_1^n\) is monotonically decreasing as a function of \(n\). Moreover we have the following inequalities 
\begin{align*}
n&<\lambda_3^n<n+1 \\
1-\frac{1}{n-3}&<\lambda_2^n<1-\frac{1}{n-2} \\
\frac{1}{n-1}&<\lambda_1^n< \frac{1}{n-2}.
\end{align*}
\end{lemma}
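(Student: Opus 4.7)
The plan is to bracket each root of $q_n$ separately by evaluating $q_n$ at the proposed endpoints and using the intermediate value theorem, then read off monotonicity from the fact that the brackets for consecutive $n$ are pairwise disjoint and shift in the appropriate direction. Lemma \ref{lem:roots} pins down the sign pattern of $q_n$: negative on $(-\infty,\lambda_1^n)\cup(\lambda_2^n,\lambda_3^n)$ and positive on $(\lambda_1^n,\lambda_2^n)\cup(\lambda_3^n,\infty)$. I will proceed in the order $\lambda_3^n$, $\lambda_2^n$, $\lambda_1^n$, so that coarse information from an earlier bracket is available to decide, for each sign change, which of the two candidate gaps it falls into.

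For $\lambda_3^n$, a one-line computation gives $q_n(n)=-1$ and $q_n(n+1)=n^2+n-1>0$, and since Lemma \ref{lem:roots} identifies $\lambda_3^n$ as the unique root larger than $1$, the bracket is immediate. For $\lambda_2^n$, the change of variable $x=1-t$ simplifies the arithmetic: one obtains $q_n(1-t)=-t^3+(2-n)t^2+(n-1)t-1$, and substitution yields $q_n(1-1/(n-2))=-(n-2)^{-3}<0$ and $q_n(1-1/(n-3))=(n^2-7n+11)/(n-3)^3$, positive for $n\ge 6$. The positive value at $1-1/(n-3)<1<\lambda_3^n$ must land in $(\lambda_1^n,\lambda_2^n)$, giving the lower bound. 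The negative value at $1-1/(n-2)$ lies either in $(-\infty,\lambda_1^n)$ or in $(\lambda_2^n,\lambda_3^n)$; I rule out the former by combining $\lambda_1^n\lambda_2^n\lambda_3^n=1$ with $\lambda_2^n<1$ and $\lambda_3^n>n$ to get $\lambda_1^n<1/n<1-1/(n-2)$.

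For $\lambda_1^n$, direct computation gives $q_n(1/(n-1))=(3-2n)/(n-1)^3<0$ and $q_n(1/(n-2))=(n^2-7n+11)/(n-2)^3>0$ for $n\ge 6$. Since $1/(n-2)<1<\lambda_3^n$, the positive value places $1/(n-2)$ in $(\lambda_1^n,\lambda_2^n)$, giving $\lambda_1^n<1/(n-2)$. The negative value at $1/(n-1)$ is placed in $(0,\lambda_1^n)$ by applying the $\lambda_2^n$ bracket just established: $\lambda_2^n>1-1/(n-3)\ge 2/3>1/(n-1)$ for $n\ge 6$. Monotonicity then follows at once: the intervals $(n,n+1)$ and $(1-1/(n-3),1-1/(n-2))$ shift strictly to the right and the intervals $(1/(n-1),1/(n-2))$ shift strictly to the left as $n$ increases, with consecutive brackets disjoint except possibly at an endpoint, so the roots trapped inside them must move monotonically in the claimed direction.

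The main obstacle is purely organizational rather than computational: the lower and upper bounds for $\lambda_1^n$ and $\lambda_2^n$ cannot be proved in isolation, because each sign-change argument requires ruling out an unwanted gap, and that requires coarse information about a \emph{different} root. Fixing the ordering $\lambda_3^n\to\lambda_2^n\to\lambda_1^n$ and leveraging the symmetric function identity $\lambda_1^n\lambda_2^n\lambda_3^n=1$ to bootstrap the coarse bounds resolves this, after which the proof is elementary polynomial arithmetic and IVT.
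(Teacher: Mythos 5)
Your proposal evaluates $q_n$ at the same six endpoints as the paper and arrives at the same brackets, but you wrap the calculation in bootstrapping machinery that the paper's argument does not need. The paper's reasoning is simply: a cubic changes sign at most three times, and the three intervals $\left(\tfrac{1}{n-1},\tfrac{1}{n-2}\right)$, $\left(1-\tfrac{1}{n-3},1-\tfrac{1}{n-2}\right)$, $(n,n+1)$ are pairwise disjoint and ordered for $n\geq 6$, each exhibiting a sign change; hence each contains exactly one root, and the ordering of the intervals identifies which labeled root sits where. There is no need to decide, root by root, which ``gap'' a given positive or negative value lands in, because the global count does it all at once -- the organizational concern you highlight is a phantom, and the order $\lambda_3^n\to\lambda_2^n\to\lambda_1^n$ plus the symmetric-function bootstrap are superfluous. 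There is also a small misstep: you claim $\lambda_1^n<1/n$ follows from $\lambda_1^n\lambda_2^n\lambda_3^n=1$ together with $\lambda_2^n<1$ and $\lambda_3^n>n$, but $\lambda_2^n<1$ cuts the wrong way (it gives $\lambda_1^n\lambda_3^n>1$, a \emph{lower} bound on $\lambda_1^n$); to deduce $\lambda_1^n<1/n$ you would need $\lambda_2^n\lambda_3^n>n$, which those hypotheses alone do not give. The error is harmless here only because the step is redundant -- your preceding sentence already established $\lambda_1^n<1-\tfrac{1}{n-3}<1-\tfrac{1}{n-2}$, which is exactly the exclusion you wanted. The monotonicity argument is the same as the paper's.
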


\begin{proof}
The polynomial is cubic and therefore changes sign at most three times. We find three disjoint intervals in which \(q_n\) changes sign. Since the polynomial is cubic each root must lie in one of these intervals.
\begin{align*}
 q_n(n)= -1<0 \qquad &\text{and} \qquad q_n(n+1)=n^2+n-1>0, \\
 q_n\left(1-\frac{1}{n-2} \right) = -\frac{1}{(n-2)^3} <0 \qquad &\text{and} \qquad
q_n\left(1-\frac{1}{n-3} \right) =\frac{11-7n+n^2}{(n-3)^3}>0, \\
q_n \left(\frac{1}{n-1}\right)=\frac{3-2n}{(n-1)^3}<0 \qquad &\text{and} \qquad q_n\left(\frac{1}{n-2} \right)  = \frac{11-7n+n^2}{(n-2)^3}>0.
\end{align*}
This establishes the desired inequalities. The monotonicity of the roots can be verified from the inequalities by inspection.
\end{proof}

Recall the definitions of $\eta_0,\eta_1,\eta_3$ from (\ref{lattice-walk-vectors}). Since $\eta_0, \eta_1$  and $\eta_3$ are independent over $\Z$, every element $\omega \in \Z^3$ can be written as  
\[
\omega=a \eta_0 + b \eta_1 + c \eta_3, \qquad \text{for $a,b,$ and $c \in \Z$}.
\]
The following lemma is an important step in the proof of Theorem \ref{thm:comb}.

\begin{lemma}{\label{lem:step}}Each element of $\mathcal E_n$ is a nonnegative linear combination of $\eta_0,\eta_1,\eta_3$.
\end{lemma}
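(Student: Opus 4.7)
The plan is to show that every $\omega \in \mathcal{E}_n$, when expanded in the $\Z$-basis $\{\eta_0, \eta_1, \eta_3\}$ of $\Z^3$ as $\omega = a\eta_0 + b\eta_1 + c\eta_3$, has $a, b, c \geq 0$. The argument combines the geometric conditions that characterize a step in the lattice walk with the quantitative eigenvalue bounds of Lemma~\ref{lem:monotone}.

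First I would extract the necessary scalar inequalities. Since $\omega = \widetilde T(\mathbf x) - \mathbf x$ for a pair of consecutive points $\mathbf x, \mathbf x + \omega \in \Lambda(X, L)$ in the $z$-ordering, and both of their $xy$-projections lie in $X = [0,1]^2$, one has
\[
\omega \cdot \xi_3 > 0, \quad \omega \cdot \xi_1 \in (-1, 1), \quad \omega \cdot \xi_2 \in (-1, 1),
\]
where $\xi_i = (1, \lambda_i, \lambda_i^2)$. The matrix with rows $\eta_0, \eta_1, \eta_3$ has determinant $-1$, so the expansion $\omega = a\eta_0 + b\eta_1 + c\eta_3$ is unique with $a, b, c \in \Z$; a direct computation gives
\[
\omega \cdot \xi_i = a(\lambda_i - 1) + b\lambda_i + c P(\lambda_i), \qquad P(\lambda) := \lambda^2 - 3\lambda + 1,
\]
and Lemma~\ref{lem:monotone}'s estimates ($\lambda_1 < \tfrac{1}{n-2}$, $1 - \lambda_2 < \tfrac{1}{n-3}$, $\lambda_3 > n$ for $n \geq 6$) yield $P(\lambda_1) \in (0, 1)$, $P(\lambda_2) \in (-1, 0)$, and $P(\lambda_3) > n^2 - 3n + 1$.

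I would then do a case analysis on the integer $c$. When $c = 0$, setting $s = a + b$ reduces the problem to $s\lambda_i - a \in (-1,1)$ for $i = 1, 2$ and $s\lambda_3 - a > 0$; the bound $|s|(\lambda_2 - \lambda_1) < 2$ together with $\lambda_2 - \lambda_1 > \tfrac{5}{12}$ forces $|s| \leq 4$, and a short finite check returns $\omega \in \{\eta_0, \eta_1, \eta_2\}$, each with $a, b \geq 0$. When $c \geq 1$, computing the linear combinations $\lambda_2(\omega \cdot \xi_1) - \lambda_1(\omega \cdot \xi_2)$ and $(1-\lambda_1)(\omega \cdot \xi_2) - (1-\lambda_2)(\omega \cdot \xi_1)$ isolates $a$ and $b$ as bounded perturbations of positive multiples of $c$, pinning them to finite ranges of nonnegative integers.

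The main obstacle, and the most intricate step, is ruling out $c \leq -1$. Here $\omega \cdot \xi_3 > 0$ combined with $c \leq -1$ forces $a(\lambda_3 - 1) + b\lambda_3 \geq -c\,P(\lambda_3) \geq P(\lambda_3) \geq n^2 - 3n + 1$, hence $a + b \geq n - 3$; on the other hand, the two isolation identities above, now involving \emph{negative} multiples of $c$, pin $a$ and $b$ to bounded intervals whose sum cannot reach $n - 3$ for any $n \geq 6$. Verifying this incompatibility uniformly in $n$ is the delicate part, requiring careful arithmetic with the $n$-dependent bounds from Lemma~\ref{lem:monotone}, but once checked it contradicts $c \leq -1$ and completes the proof.
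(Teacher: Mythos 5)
Your approach — expand $\omega$ in the basis $\{\eta_0,\eta_1,\eta_3\}$, translate the step conditions into scalar inequalities $\omega\cdot\xi_1,\omega\cdot\xi_2\in(-1,1)$, $\omega\cdot\xi_3>0$, and then bound the integer coefficients $a,b,c$ — is a genuinely different organization from the paper's. The paper instead enumerates the possible sign patterns of $(a,b,c)$ (e.g.\ $a\eta_0-b\eta_1$, $c\eta_3-a\eta_0-b\eta_1$, etc.) and, for each mixed-sign pattern, exhibits one quantity among $\pi_{xy}(\omega)_x$, $\pi_{xy}(\omega)_y$, or $|\pi_{xy}(\omega)_x-\pi_{xy}(\omega)_y|$ that falls outside the permitted range. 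Your $c=0$ analysis is fine, and the $c\le-1$ contradiction is the right idea (even if you leave its uniform-in-$n$ verification as a promissory note).

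The step that does not go through as written is $c\ge 1$. The two ``isolating'' linear combinations give only separate necessary bounds on $a$ and on $b$, and these are not tight enough to force nonnegativity. Concretely, $\lambda_2(\omega\cdot\xi_1)-\lambda_1(\omega\cdot\xi_2)=(\lambda_2-\lambda_1)\bigl(-a+c(1-\lambda_1\lambda_2)\bigr)$ has absolute value less than $\lambda_1+\lambda_2$, so
\[
\bigl|-a+c(1-\lambda_1\lambda_2)\bigr|<\frac{\lambda_1+\lambda_2}{\lambda_2-\lambda_1}.
\]
At $n=6$ one has $\lambda_1+\lambda_2\approx 0.967$, $\lambda_2-\lambda_1\approx 0.521$, $1-\lambda_1\lambda_2\approx 0.834$, so for $c=1$ the right-hand side is $\approx 1.86$ and $a$ ranges over roughly $(-1.02,\,2.69)$, which still contains $a=-1$. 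So the claim that this ``pins $a$ to a range of nonnegative integers'' is false for $(n,c)=(6,1)$; ruling out $a=-1$ requires returning to the joint system: with $a=-1$, $c=1$, the inequality $\omega\cdot\xi_1<1$ forces $b\le-1$ while $\omega\cdot\xi_2>-1$ forces $b\ge 0$, a contradiction. In other words, the coefficient-isolation trick alone is insufficient; you must intersect the constraints on $b$ coming from both coordinates for the borderline values of $a$ (and symmetrically for $b$). This is exactly the kind of coordinate-by-coordinate exclusion the paper carries out case by case, so after patching this your argument essentially reconverges with theirs, just organized by the value of $c$ rather than by the sign pattern of $(a,b,c)$.
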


\begin{proof}
Note that $\pi_{z}(\eta_i)>0$ for $i=0,\dots, 3$ and $\eta_2=\eta_0+\eta_1$. Here we discuss all possible cases of $\omega \in \Z^3$ such that 
\begin{enumerate}
\item $\pi_{xy}(\omega) \in (-1,1)^2$ which ensures that $\pi_{xy}(\omega)$ is a translation vector on $X$.
\item $0<\pi_{z}(\omega)<\pi_{z}(\eta_i)$ for some $i=0,1$ and $3$. 
\end{enumerate}
\vskip 1em

\textbf{Case 1: } $\omega=a\eta_0-b\eta_1$ for positive integers $a$ and $b$. Suppose that the vector
\[
\omega=a\eta_0-b\eta_1= (-a,a-b,0)
\]
has $\pi_{z}(\omega)>0$ and $\pi_{xy}(\omega) \in (-1,1)^2$. The y-component of the projection $\pi_{xy}(\omega)$ is 
\[
-a+(a-b) \lambda_2
\] 
where $\lambda_2$ is the second largest eigenvalue of matrix $M_n$ for some $n$. By assumption, we have
\[
-1 < -a + (a-b) \lambda_2<1.
\]
It follows that 
\[
 \frac{-1+a}{\lambda_2}< a-b <\frac{1+a}{\lambda_2}.
\]
By Lemma \ref{lem:monotone} and $\dfrac{2}{3}<\lambda_2 <1$
\[
-1+a< \frac{-1+a}{\lambda_2}< a-b <\frac{1+a}{\lambda_2} < \frac{3}{2}(1+a).
\]
Then, we can conclude that 
\[
-2<b<1
\]
which contradicts to the assumption that $a,b \geq 1$. 

This argument also shows that if $\omega=a\eta_0-b\eta_1$ with $a,b \in \Z^2$ positive, $\pi_{xy}(-\omega) \notin (-1,1)^2$.\\

\textbf{Case 2:} $\omega = c\eta_3-b\eta_1$ for positive integers $b$ and $c$.
Note that 
\[
\omega=c\eta_3-b\eta_1 = c(1,-3,1)-b(0,1,0) = (c, -3c-b, c).
\]
Consider the y-component of $\pi_{xy}(\omega)$: we have 
\begin{align*}
c-(3c+b)\lambda_2 + c \lambda_2^2  \ & \leq c(1+\lambda_2^2)-(3c+1)\lambda_2\\
& \leq 2 c - \dfrac{3}{4}(3c+1)\\
& \leq -\dfrac{1}{4} c - \frac{3}{4} \leq -1.
\end{align*}
It follows that $\pi_{xy}(\omega) \notin (-1,1)^2$ for all $c\eta_3-b\eta_1$ with $v,t \in \Z_+$.  Similarly, $\pi_{xy}(\omega) \notin (-1,1)^2$ for all $\omega=b\eta_1 - c\eta_3$ with positive integers $b$ and $c$.\\

\textbf{Case 3: } $\omega = c\eta_3 - a \eta_0$ for positive integers $a$ and $c$. Note that
\[
c\eta_3-a\eta_0 = c(1,-3,1)-a(-1,1,0) = (a+c, -3c-a, c).
\]
Consider the $x$-coordinate of the projection $\pi_{xy}(\omega)$. By Lemma \ref{lem:monotone}$, \lambda_1\leq 1/4$ and we have
\begin{align*}
(a+c)-(3c+a) \lambda_1 + c \lambda^2_1 & \geq  (a+c)-(3c+a) \frac{1}{4} + c \lambda^2_1\\
& \geq \frac{1}{4} c +\frac{3}{4}a + c \lambda^2_1  \geq 1.
\end{align*}
Therefore, $\pi_{xy}(\omega) \notin (-1,1)^2$ for all $\omega=c\eta_3 - a \eta_0$ with integers $a, c \geq 1$. Similarly, if $\omega=a\eta_0 - c\eta_3$ with positive coefficients $a,c$, then the $x$-coordinate of $\pi_{xy}(\omega)$ is less than $-1$.  \\

\textbf{Case 4: } $\omega=c\eta_3+a\eta_0-b\eta_1$ with positive integers $a, b$ and $c$.  Consider the y-component of $\pi_{xy}(\omega)$
\[
\pi_{xy}(c\eta_3+a\eta_0-b\eta_1)_y=\pi_{xy}(c\eta_3-b\eta_1)_y + a\pi_{xy}(\eta_0)_y
\]
By Case 2, $\pi_{xy}(c\eta_3-b\eta_1)_y \leq -1$ for all $b,c \in \Z_+$. Moreover, $\pi_{xy}(\eta_0)_y <0$. Thus, there is no possible $\omega=c\eta_3+a\eta_0-b\eta_1$ with $\pi_{xy}(\omega) \in (-1,1)^2$. For the same reason, $\pi_{xy}(-\omega) \notin (-1,1)^2$. \\ 

\textbf{Case 5: } $\omega=c\eta_3-a\eta_0+b\eta_1$ for $a,b,c \in \Z_+$.  Consider the $x$-component $\pi_{xy}(\omega)_x$ of the projection $\pi_{xy}(\omega)$ given as
\[
\pi_{xy}(c\eta_3-a\eta_0+b\eta_1)_x=\pi_{xy}(c\eta_3-a\eta_0)_x + b\pi_{xy}(\eta_1)_x.
\]
In Case 3, we show that  $\pi_{xy}(c\eta_3-a\eta_0)_x \geq 1$ for all positive integers $a$ and $c$. Since 
\[
\pi_{xy}(\eta_1)_x >0
\] 
we have $\pi_{xy}(\pm\omega) \notin \mathcal (-1,1)^2$.\\

\textbf{Case 6: } $\omega=c \eta_3 - a\eta_0 - b\eta_1$ for positive integers $a,b,c$.
Since $\omega=(a+c, -3c-a-b, c)$
\[
\pi_{xy}(\omega)=(a+c-(3c+a+b)\lambda_1+c \lambda_1^2, \ a+c-(3c+a+b)\lambda_2+c \lambda_2^2).
\]
We consider the difference $|\pi_{xy}(\omega)_y-\pi_{xy}(\omega)_x|$ which is
\begin{align*}
|\pi_{xy}(\omega)_y - \pi_{xy}(\omega)_x| & = |-(3c+a+b)(\lambda_2-\lambda_1)+c(\lambda_2^2-\lambda_1^2)|\\
&=|(\lambda_2-\lambda_1)[c(\lambda_1+\lambda_2-3)-a-b]|
\end{align*}
By Lemma \ref{lem:monotone}, we have $0\leq \lambda_1\leq 1/5$ and $3/4 \leq \lambda_2 \leq 1$ where $\lambda_2$ is the second largest eigenvalue for matrix $M_n$ with $n\geq 7$. Therefore, 
\begin{align*}
|\pi_{xy}(\omega)_y - \pi_{xy}(\omega)_x| & \geq \frac{1}{2}\ |c(\lambda_1+\lambda_2-3)-a-b|\\
& \geq \frac{1}{2} \ |-2c -a-b |. 
\end{align*}
Since $a, b ,c \geq 1$ are integers, it means that $|\pi_{xy}(\omega)_y - \pi_{xy}(\omega)_x| \geq 2$. It follows that $\pi_{xy}(\omega)_x$ and $\pi_{xy}(\omega)_y$ cannot be in the interval $(-1,1)$ at the same time. It follows that $\pi_{xy}(\omega) \notin (-1,1)^2$ for any positive integer $a,b,c$. Moreover, $\pi_{xy}(-\omega) \notin (-1,1)^2$. \\

\textbf{Case 7. } $\omega=a\eta_0+b\eta_1$ for non-negative integers $a$ and $b$ with $a\geq 2$ or $b\geq 2$. We compute the case when $a=2$. Then $\omega=2\eta_0 = (-2, 2,0)$ which implies that the x-coordinate of $\pi_{xy}(\omega) \notin (-1,1)$ by Lemma \ref{lem:monotone}. Similarly, when $b=2$ we compute $\omega=2\eta_1=(0,2,0)$ and the y-coordinate of the projection $\pi_{xy}(\omega)$ is not in the interval $(-1,1)$.

Therefore, we remain to check the case when $a+b\geq 3$ for non-negative integers $a$ and $b$. We have the vector $\omega=a\eta_0+a\eta_1 = (-a,a+b,0)$. Therefore 
\[
-a + \frac{a+b}{n-1} \leq \pi_{xy}(\omega)_x \leq -a + \frac{a+b}{n-2}
\]
\[
b-\frac{a+b}{n-3}\leq \pi_{xy}(\omega)_y \leq b-\frac{a+b}{n-2}
\]
so that 
\begin{eqnarray*}
\pi_{xy}(v)_y - \pi_{xy}(v)_x &\geq&  (a+b)(1-\frac{1}{n-3}-\frac{1}{n-2})\\
&\geq& 3(1-\frac{2}{n-3}) \geq 2 \quad \mbox{for $n\geq 9$}.
\end{eqnarray*}

When $n=7$, 
\[
-a+\dfrac{1}{2}\leq \pi_{xy}(\omega)_x \leq -a+\dfrac{3}{5}
\]
so that if $\pi_{xy}(\omega)_x \in (-1,1)$, then $a$ must be $0$ or $1$. It means that $b=3$ or $b=2$ respectively. However, 
\[
b-\frac{3}{4}\leq \pi_{xy}(\omega)_y \leq b-\frac{3}{5}.
\]
For either case, $\pi_{xy}(\omega)_y > 1$. The proof of the case $n=8$ is the same.
\end{proof}

\begin{proof}[Proof of Theorem \protect{\ref{thm:comb}}] 
Recall that \(\mathcal E_n\) is defined to be a set of steps in the lattice walk \(\widetilde T:\Lambda(X,L)\to \Lambda(X,L)\). By Lemma \ref{lem:step} every vector in \(\mathcal E_n\) is a non-negative linear combination of $\eta_0$, $\eta_1$ and $\eta_3$. We show that the seven vectors in \(\mathcal E_n\) with the smallest projections under \(\pi_z\) are sufficient to describe all steps in the lattice walk \(\widetilde T\). Moreover, Lemma \ref{lem:step} establishes that the seven vectors in Equation \ref{lattice-walk-vectors} are exactly the seven shortest vectors in \(\mathcal E_n\).

In Equation \ref{eq:partition} we construct the partition \(\mathcal A=\{A_i\}_{i=0}^6\) with translation vectors \(v_i=\pi_{xy}(\eta_i)\). Applying the inequalities from Lemma \ref{lem:monotone} one can verify that \(\mathcal A\) gives a partition of \(X\) into seven rectilinear polygons with disjoint interiors. Let \(p\in \Lambda(X,L)\) and \(A_i\) the tile with \(\pi_{xy}(p)\in A_i\). Then \(\pi_{xy}(p) + v_i\in  X\) since \(X\) overlaps with \(X+v_i\) for each \(i=0,1,\ldots,6\).  It follows that \(p + \eta_i\in \Lambda(X,L)\) and therefore $\eta_i$ is a valid step in the lattice walk.  Since \(p\) is an arbitrary point in \(\Lambda(X,L)\) we conclude that the vectors in $\mathcal E_n=\{\eta_i\}_{i=0}^6$ are sufficient to define all of the steps in the lattice walk in \(\Lambda(X,L)\).
\end{proof}

\subsection{Proof of Theorem \ref{thm:single}}\label{prf:single}
Fix $n\ge 6$ and consider the REM $T_{M_n}: X\to X$. Let $Y$ be the rectangle $A_0 \in \mathcal A$. It is sufficient to compute the first return map for the lattice walk \(\widetilde T\) because the lattice is dense in \(X\) and points which are sufficiently close in \(X\) have the same sequence of translation of vectors for finite time.

Define 
\[
\Lambda_X= \Lambda(X, L) \ \mbox{and} \ \Lambda_Y= \{(x,y,z) \in \Z^3~|~ \pi_{xy}(x,y,z) \in Y\}.
\] 
Since $Y\subset X$, we have $\Lambda_Y \subset \Lambda_X$. Let $(a,b,c)$ be a lattice point in $\Lambda_X$. Consider the map $\Psi$ defined by
\[
\Psi: \begin{bmatrix}
a\\ b\\ c 
\end{bmatrix} \mapsto 
(M_n)^t\begin{bmatrix}
a\\ b\\ c 
\end{bmatrix} +
\begin{bmatrix}
1 \\ -1 \\ 0
\end{bmatrix}.
\]

We show that $\Psi$ maps $\Lambda_X$ to $\Lambda_Y$. Then 
\[
\pi_{xy} \circ \Psi \left(  \begin{bmatrix}
a\\ b\\c
\end{bmatrix}\right )
\]
has the $i$-th coordinate 
\[
(c+1)+\lambda_i(a-nc-1)+\lambda_i^2(b+(n+1)c)
\]
for $i=1$ and $2$. Since $\lambda_i$ is a root of the characteristic polynomial 
\[
q_n(x)=x^3 - (n+1)x^2 + nx -1
\]
we have
\begin{align*} 
&  (c+1)+\lambda_i(a-nc-1)+\lambda_i^2(b+(n+1)c) \nonumber \\
&= \lambda_i(a+\lambda_i b) + [(n+1)\lambda^2_i-n\lambda_i+1]c + 1-\lambda_i  \\
& = \lambda_i (a+b\lambda_i+c\lambda^2_i) + (1-\lambda_i). 
\end{align*}
It follows that for element $(a,b,c) \in \Lambda_X$, we have
 \[
\pi_{xy}  \circ \Psi \left(  \begin{bmatrix}
a\\ b\\c
\end{bmatrix}\right ) \in Y \quad \mbox{and} \quad   \Psi  \begin{bmatrix}
a\\ b\\c
\end{bmatrix} \in \Lambda_Y.
\]
In addition, the map $\Psi: \Lambda_X \to \Lambda_Y$ is a bijection with the inverse
\[
\Psi^{-1} \left ( \begin{bmatrix}
a \\ b\\c 
\end{bmatrix} \right ) =\begin{bmatrix}
n & 1 & 0 \\
-(n+1) & 0 & 1 \\
1 & 0 & 0
\end{bmatrix}
\left(\begin{bmatrix}
a\\b\\c
\end{bmatrix}-\begin{bmatrix}
1 \\ -1 \\0
\end{bmatrix}\right).
\]
\begin{lemma}
The map $\Psi$ preserves the ordering of the lattice walk $\{\omega_0, \omega_1, \omega_2 \cdots\}$ corresponding to the orbits $\{p, T(p), T^2(p), \cdots\}$, i.e. 
\[
\pi_{z}(\omega_i)<\pi_{z}(\omega_j) \quad \mbox{if and only if} \quad \pi_{z} \circ \Psi(\omega_i)<\pi_{z} \circ \Psi (\omega_j).
\]
\end{lemma}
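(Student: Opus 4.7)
The plan is to show that $\pi_z \circ \Psi$ is simply a strictly increasing affine function of $\pi_z$, specifically
\[
\pi_z(\Psi(v)) = \lambda_3 \, \pi_z(v) + (1 - \lambda_3),
\]
from which the lemma follows immediately: if $\pi_z(\omega_i) < \pi_z(\omega_j)$, multiplying by the positive constant $\lambda_3$ and adding $(1-\lambda_3)$ preserves the strict inequality, and since the map is injective on $\R$, the converse holds as well.

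To establish this identity, the key step is to recognize $M_n^t$ as the matrix of multiplication by $\lambda$ in $\Z[\lambda]$ relative to the basis $(1, \lambda, \lambda^2)$. Indeed, using $q_n(\lambda) = 0$ we have $\lambda^3 = (n+1)\lambda^2 - n\lambda + 1$, so
\[
\lambda \cdot (a + b\lambda + c\lambda^2) = c + (a - nc)\lambda + (b + (n+1)c)\lambda^2,
\]
and the coordinate vector $(c, a-nc, b+(n+1)c)^t$ agrees with $M_n^t (a,b,c)^t$ by direct comparison to the rows of $M_n^t$. Under the Galois embedding sending $\lambda \mapsto \lambda_3$ (which is how $\pi_z$ is defined), multiplication by $\lambda$ becomes multiplication by the scalar $\lambda_3$; hence $\pi_z(M_n^t v) = \lambda_3 \, \pi_z(v)$. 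Combining with $\pi_z((1,-1,0)^t) = 1 - \lambda_3$ yields the claimed formula.

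Since $\lambda_3 > 1 > 0$ by Lemma \ref{lem:roots}, the affine map $t \mapsto \lambda_3 t + (1 - \lambda_3)$ on $\R$ is strictly monotonically increasing, and the biconditional in the statement follows. There is no serious obstacle here; the only conceptual point to verify carefully is that the transpose $M_n^t$ (rather than $M_n$ itself) is what implements scalar multiplication by $\lambda$ on the coefficient vector, which is the reason the map $\Psi$ was defined with $M_n^t$ in the first place.
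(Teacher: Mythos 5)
Your proof is correct and follows essentially the same route as the paper: both establish the identity $\pi_z\circ\Psi(v)=\lambda_3\,\pi_z(v)+(1-\lambda_3)$ and then observe that $\lambda_3>0$ makes this affine map strictly increasing. The paper derives the formula by direct expansion using $q_n(\lambda_3)=0$, while you package the same computation conceptually by recognizing $M_n^t$ as the matrix of multiplication by $\lambda$ on coefficient vectors in the basis $(1,\lambda,\lambda^2)$, which is a cleaner way to see why the identity holds but not a genuinely different argument.
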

\begin{proof}
The proof follows directly from the calculation 
\[
\pi_{z} \circ \Psi \left( \begin{bmatrix}
a \\ b \\ c
\end{bmatrix} \right) = \lambda_3 (a+b \lambda_3 + c \lambda^2_3)+(1-\lambda_3) =  
\lambda_3 \thinspace \pi_{z} \left( 
\begin{bmatrix}
a\\b\\c
\end{bmatrix}
\right ) + (1-\lambda_3)
\]
where $\lambda_3>1$ is a root of the polynomial $q_n(x)$.\\
\end{proof}

Suppose $\omega_1 \in \Lambda(Y,L)$ and $q=\pi_{xy}(\omega_1)$. Consider the sequence $\{\omega_1, \omega_2 , \cdots\}$ of consecutive points of the lattice walk in $\Lambda_Y$. Let $\omega'_1=\Psi^{-1}(\omega_1)$ and $\{\omega'_1, \omega'_2, \cdots\}$ be the lattice walk in $\Lambda_X$ starting at $\omega_1'$.  We claim that
$$\omega_2 = \omega_1 + \Psi(\omega'_2-\omega'_1).$$ 
To see this, note that $\Psi$ is bijective and
$$
\Psi^{-1}(\omega_1+\Psi(\omega'_2-\omega'_1))=\omega'_1+\omega'_2-\omega'_1=\omega'_2 \in \Lambda_X.
$$
Also note that $\omega_2'$ is the point in $\Lambda(X,L)$ of smallest $z$-coordinate after $\omega_1'$. 


\old{
Let $p_1=\pi_{xy}(\omega_1)$ and $p_2 = \pi_{xy} \circ \Psi( \omega'_2)$. We show that $p_2$ is the image of the first return map $\hat T|_Y (p_1)$. Prove by contradiction. Suppose $\exists$ $\omega_j \in \Lambda_Y$ such that $\pi_{z}(\omega_1)<\pi_{z}(\omega_j)<\pi_{z} \circ \Psi(\omega'_2)$. Then $\pi_{z}(\omega_j -\omega_1)<\pi_{z}(\Psi(\omega'_2)-\omega_1)$. Since the map $\Psi$ preserves the ordering of the lattice walk, $\Psi^{-1}$ also preserves the order. Thus, 
\[
\pi_{z}(\Psi^{-1}(\omega_j)-\omega'_1)<\pi_{z}\circ \Psi^{-1}(\Psi(\omega'_2)-\omega_1)=\pi_{z} (\omega'_2-\omega'_1).
\]
By the construction $\pi_{z}(\omega'_2-\omega'_1)=\pi_{z}(\omega'_2)-\pi_{z}(\omega'_1)=\min\{\pi_{z}(\omega)-\pi_{z}(\omega'_1)| \thinspace \omega \in  \Z^3\}$. Hence we have $p_2=\hat T|_Y (p_1)$.
}
\qed

\section{Multi-stage REMs}\label{sec:multi}

\subsection{Construction}\label{sec:cons-multi}
Recall that for $n\ge 6$ there is a PV REM $T_{M_n}$ associated to a matrix
\[
M_n=\begin{bmatrix}
0 & 1 & 0 \\
0 & 0 & 1 \\
1 & -n & n+1
\end{bmatrix}.\]
Let \(\{v_i'\}_{i=0}^6\) be the translation vectors of \(T_{M_n}\) constructed as in section \ref{sec:cons}. Certain products of the matrices in \(\S\) define REMs with the same combinatorics as \(T_{M_n}\) (recall that the family of REMs defined by single matrices in 
\(\S \) all have the same combinatorics). 

Let \(W\in \mathcal M\) and define the normalized eigenvectors of \(W\) associated to $\lambda_1,\lambda_2$ to be
\[
\xi_1=(1, x, x') \quad \mbox{and} \quad \xi_2=(1, y, y'),
\]
scaled so that the first coordinate is \(1\).  Lemma \ref{lem:monoid} establishes that $W$ has real and positive eigenvalues. Since \(W\) is an integer matrix the eigenvectors are also real and we can define the projection \(\pi_{xy}: \Z^3 \to \R^2\) by
\[
\pi_{xy}: \mathbf x \mapsto (\mathbf x \cdot \xi_1,\  \mathbf x \cdot \xi_2).
\]

There is a dynamical system induced by \(W\) whose translation vectors are 
\[V=\{ v_i=\pi_{xy}(\eta_i), \text{ for } i=0,1,\ldots 6\}\]
where \(\mathcal E=\{\eta_i\}_{i=0}^6\) are 
\begin{align*}
\begin{aligned}
\eta_0 & =(-1,1,0), \quad \eta_1=(0,1,0), \quad \eta_2=\eta_0+\eta_1=(-1,2,0) \\ 
\eta_3 & =(1,-3,1), \quad \eta_4=\eta_0+\eta_3=(0,-2,1), \\
\eta_5 & =\eta_1+\eta_3=(1,-2,1), \quad \mbox{and} \quad \eta_6  =\eta_0+\eta_1+\eta_3=(0,-1,1)
\end{aligned}
\end{align*}
(their representations in \(\Z^3\) are the same as in (\ref{lattice-walk-vectors})). 

\begin{defn}\label{defn:admissible}
We say that \(W\) is an \textbf{\emph{admissible}} matrix when \(\xi_1,\xi_2\in \R^3_{>0}\) and the following two conditions are satisfied for each \(i=0,1 \ldots, 6\):
\begin{enumerate}
\item \(v_i\in (-1,1)^2\) 
\item \(v_i\) and \(v_i'\) lie in the same quadrant of \(\R^2\).
\end{enumerate}

We let $T_W$ be the REM constructed with these translation vectors whose partition is constructed using the method in section \ref{sec:cons};
we call it an {\bf admissible REM}. Let \(\mathcal M_A\subset \mathcal M\) be the subset of admissible matrices.
\end{defn}
The tiles in the partition \(\mathcal A = \{ A_0,\ldots, A_6\}\) associated to \(T_W\) are  
\begin{enumerate}
\setcounter{enumi}{-1}
\item \(A_0=[1- x,1] \times [1-y,1]\)
\item  \(A_1= [0, 1- x]\times [0, 1- y] \)
\item \(A_2= \left([1-2x,1-x] \times[1-y,2-2y] \right) \cup \left( [1-x,1]\times[0,1-y] \right)  \)
\item \(A_3 =[0,3x-x'] \times [-1+3 y-y',1] \)
\item \(A_4 = [3x-x', 1-x] \times [2y-y',1]\)
\item \(A_5=[0, 2x-x'] \times[ 1- y, -1+3y-y']\)
\item \(A_6 =\big ( [1-2x,3x-x'] \times[2-2y,-1+3y-y'] \big)  \\ 
\cup \big ( [2x - x', 1-2 x]\times[1-y,-1+3 y-y'] \big ) \\
\cup\big( [3x-x',1-x]\times[2-2y,2y-y'] \big) \).
\end{enumerate}

Within \(\mathcal M_A\) there is a subset  \(\mathcal M_R\) of matrices whose resulting REMs are renormalizable. Suppose \(W\in \mathcal M_A\) written in terms of generators as \(W= M_{n_L}M_{n_{L-1}} \cdots M_{n_1}\) with each \(M_{n_i}\in \S\). We develop an \(L\)-step renormalization scheme for the multistage REM \(T_W\).

To simplify the exposition, we introduce a notation for partial matrix products. Let $W_1 = M_{n_1}$ and set
\[
W_k = M_{n_{k}} \cdots M_{n_{1}}, \quad  \text{ for } k = 1,2, \ldots L
\]
with \(W = W_L\). For $k=1, 2, \cdots, L$, define the vectors $\xi^k_1=(1, x_k, x'_k)$ and $\xi^k_2=(1, y_k, y'_k)$ to be scalings of 
\[
W_k  \xi_1 \quad \mbox{and} \quad W_k  \xi_2
\]
normalized so that the first coordinate is \(1\). Define the projection $\pi_{xy}^k: \Z^3 \to \R^2$ by the formula
\[
\pi_{xy}^k: \mathbf x \mapsto (\mathbf x \cdot \xi_1^k,\  \mathbf x \cdot \xi_2^k).
\] 
At the \(k\)-th stage the translation vectors
\[V_k=\{ v_i^k=\pi_{xy}^k(\eta_i), \text{ for } i=0,1,\ldots 6\}\]
define a REM \(T_{W_k}\) with partition \(\mathcal A_k= \{ A_0, \ldots A_6\}\) where \(x=x_k\),\(x'=x_k'\), \(y= y_k\) and \(y'=y_k'\).

\begin{defn}\label{defn:multi}
An admissible REM \(T_W\) is a \textbf{\emph{multi-stage REM}} when the two conditions:
\begin{enumerate}
\item \(v_i^k\in (-1,1)^2\) 
\item \(v_i^k\) and \(v_i'\) lie in the same quadrant of \(\R^2\) 
\end{enumerate}
are satisfied for all \(i=0,1 \ldots, 6\) and all \(k=1,2 \ldots, L\). 
\end{defn}
At every stage \(i\) the REM \(T_{W_i}\) has the same combinatorics as \(T_W\). We prove that a multistage REM associated to a word \(W\) decomposed into a product of \(L\) generating elements \(\S\) has a \(L\)-step renormalization scheme.

\begin{thmu}[Detailed statement of Theorem \ref{thm:mult}]
Let $W=M_{n_L} M_{n_{L-1}}\cdots M_{n_{1}}\in \mathcal M_R$ and \(T_{W_k}:X\to X\) be the \(k\)-th stage of the multistage REM \(T_W\). For each stage $k$ let $Y_k=A^k_0$ be the rectangle of width $x_k$ and height $y_k$ whose upper left vertex is (1,1). Then 
\[
\widehat T_{W_k}|_{Y_k}=\phi^{-1}_k \circ T_{W_{k+1}} \circ \phi_k
\]
where $\phi_k: Y_k \to X$ is defined by
\[
\phi_k: (x,y) \mapsto \left(\frac{x+x_k-1}{x_k}, \frac{y+y_k-1}{y_k}\right).
\] 
\end{thmu}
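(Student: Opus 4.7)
The plan is to adapt the lattice-walk argument from the proof of Theorem \ref{thm:single} to the multi-stage setting. For each $k$, I would introduce a map $\Psi_k:\Z^3\to\Z^3$ of the form
\[
\Psi_k(v) = M_{n_{k+1}}^t\, v + (1,-1,0)^t,
\]
write $\Lambda_X^{(j)} = \{v\in\Z^3:\pi_{xy}^j(v)\in X\}$ and $\Lambda_{Y_j}^{(j)} = \{v\in\Z^3:\pi_{xy}^j(v)\in Y_j\}$, and show that $\Psi_k$ is a bijection $\Lambda_X^{(k+1)}\to\Lambda_{Y_k}^{(k)}$ which intertwines the two projections via $\pi_{xy}^k\circ\Psi_k = \phi_k^{-1}\circ\pi_{xy}^{k+1}$ and preserves the ordering used to define the lattice walk. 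Since the stage-$k$ lattice walk restricted to $\Lambda_{Y_k}^{(k)}$ projects to $\widehat T_{W_k}|_{Y_k}$, while the full stage-$(k+1)$ walk projects to $T_{W_{k+1}}$, the desired conjugacy $\widehat T_{W_k}|_{Y_k} = \phi_k^{-1}\circ T_{W_{k+1}}\circ\phi_k$ will follow by pushing $\Psi_k$ through the projections, exactly as in the single-matrix case.

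The technical core is the algebraic identity
\[
M_{n_{k+1}}\xi_1^k = x_k\,\xi_1^{k+1},\qquad M_{n_{k+1}}\xi_2^k = y_k\,\xi_2^{k+1},\qquad M_{n_{k+1}}\xi_3^k = z_k\,\xi_3^{k+1},
\]
with $z_k>0$. This is a consequence of the cyclic form of $M_n$: for any column vector $(A,B,C)^t$ one has $M_n(A,B,C)^t = (B,\,C,\,A - nB + (n+1)C)^t$, and applying this to $(A,B,C) = W_k\xi_i$, together with the normalizations $\xi_i^k = W_k\xi_i/(W_k\xi_i)_1$ and $\xi_i^{k+1} = W_{k+1}\xi_i/(W_{k+1}\xi_i)_1$, identifies the scalar as $(W_k\xi_i)_2/(W_k\xi_i)_1$, i.e.\ the second coordinate of $\xi_i^k$. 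A direct computation then yields
\[
\pi_{xy}^k\bigl(\Psi_k(v)\bigr) = \bigl(x_k\,\pi_{xy}^{k+1}(v)_1 + 1 - x_k,\ y_k\,\pi_{xy}^{k+1}(v)_2 + 1 - y_k\bigr) = \phi_k^{-1}\bigl(\pi_{xy}^{k+1}(v)\bigr),
\]
and $\pi_z^k(\Psi_k(v)) = z_k\,\pi_z^{k+1}(v) + (1-z_k)$. Positivity of $z_k$ gives the needed preservation of the $z$-ordering, while $\det M_{n_{k+1}} = 1$ guarantees that $\Psi_k$ is a bijection of $\Z^3$, so the projection identity upgrades to the claimed bijection $\Lambda_X^{(k+1)}\leftrightarrow\Lambda_{Y_k}^{(k)}$.

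With these ingredients in hand, the rest proceeds as in the proof of Theorem \ref{thm:single}: for $p\in Y_k$ approximated by projections of lattice points $\omega\in\Lambda_{Y_k}^{(k)}$, the next point of the stage-$k$ walk lying in $\Lambda_{Y_k}^{(k)}$ is obtained by transporting $\omega$ via $\Psi_k^{-1}$ to $\Lambda_X^{(k+1)}$, taking one step of the stage-$(k+1)$ walk, and transporting back; after projection this is precisely $\phi_k^{-1}\circ T_{W_{k+1}}\circ\phi_k$ applied to $p$. The extension from lattice points to arbitrary $p\in Y_k$ uses the density and common-symbolic-itinerary argument from the proof of Theorem \ref{minimality}. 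The main obstacle I anticipate is verifying the geometric stability of the construction across stages: namely, that the multi-stage admissibility condition $W\in\mathcal M_R$ really forces $z_k>0$ and makes the rectangle $A_0^k = [1-x_k,1]\times[1-y_k,1]$ a tile of the stage-$k$ partition for every $k$, so that $\Psi_k$ actually carries $\Lambda_X^{(k+1)}$ into $\Lambda_{Y_k}^{(k)}$. Once this is secured, the argument is essentially $L-1$ iterated copies of Theorem \ref{thm:single} with $M_n$ replaced at step $k$ by the next generator $M_{n_{k+1}}$.
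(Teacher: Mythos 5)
Your approach is essentially identical to the paper's: the same affine lattice map $\Psi_k$, the same scaling identity $M_{n_{k+1}}\xi_i^k = (\xi_i^k)_2\,\xi_i^{k+1}$ derived from the cyclic structure of $M_n$, the same bijection $\Lambda_{X_{k+1}}\to\Lambda_{Y_k}$, and the same push-through-projection finish. Your explicit verification of the $z$-ordering step, $\pi_z^k\circ\Psi_k = z_k\,\pi_z^{k+1}+(1-z_k)$ with $z_k>0$, is in fact something the paper only handles implicitly by invoking ``the same argument as in Section~\ref{prf:single}''. The positivity you flagged as the remaining obstacle does hold: the same manipulation showing $\xi_3^k$ is a scaling of $W_k\xi_3$ shows it is an eigenvector of the cyclic shift $M_{n_k}\cdots M_{n_1}M_{n_L}\cdots M_{n_{k+1}}\in\mathcal M$ for eigenvalue $\lambda_3$, so by the Perron--Frobenius argument in Lemma~\ref{lem:monoid} (conjugating to a product of the primitive matrices $P_n$) the normalized $\xi_3^k$ is a positive vector and $z_k=(\xi_3^k)_2>0$.
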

Figure \ref{fig:seq} shows the sequence of partitions in the renormalization scheme for a multistage REM with four stages.

\begin{figure}[h]
  \begin{center}
    \includegraphics[width=\textwidth]{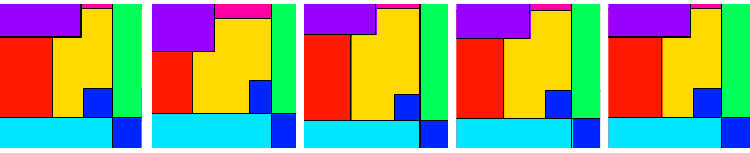}
\caption{The multi-stage REM \(T_W\) and associated REMs \(T_{W_1},T_{W_2},T_{W_3}\) and \(T_{W_4}=T_W\) with \(W=M_7 M_7 M_8 M_6\).}
\label{fig:seq}
  \end{center}
\end{figure}

\old{
\begin{figure}[h]
  \begin{center}
    \includegraphics[scale=0.6]{fig/seq_return1}
\caption{First 2 steps of the renormalization scheme. In the first row the first return set $Y_0$ is shown bordered in black and in the second row the first return set $Y_1$ is shown bordered in black. \td{What are the other panels of this Figure? Is it related to the previous figure?}}
\label{fig:seq1return}
  \end{center}
\end{figure}}

\begin{figure}[h]
  \begin{center}
    \includegraphics[scale=0.6]{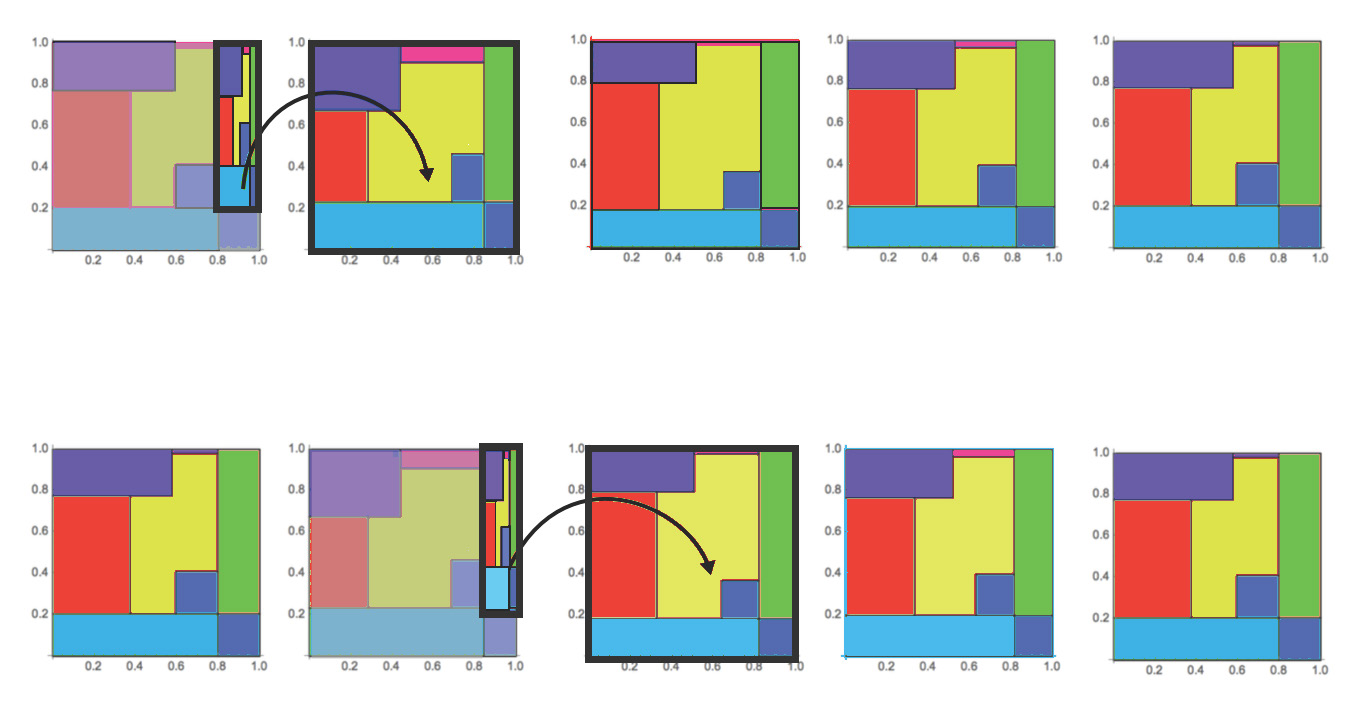}
\caption{Detailed view of the renormalization scheme shown in Figure \ref{fig:seq}. The first row shows the first return set \(Y_0\) bordered in black with the partition induced by the first return map overlayed. An arrow points to the REM in the sequence to which the first return map is affinely conjugate. The second row shows the same for \(Y_1\).}
\label{fig:seq1return}
  \end{center}
\end{figure}

\begin{proof}[Proof of theorem \ref{thm:mult-mi}]
Let \(W\in \mathcal M_A\) with eigenvalues \(\lambda_1,\lambda_2,\lambda_3\) and associated eigenvectors \(\xi_1,\xi_2,\) and \(\xi_3\) normalized so that the first coordinate is one. The multistage REM \(T_W\) can be constructed using cut-and-project sets with
\[\Lambda(X, L)=\{\mathbf x \in \Z^3: \ \pi_{xy}(\mathbf x) \in X\}\]
where the projection \(\pi_{xy}\) is defined as above. Therefore the same method as used in the proof of Theorem \ref{minimality} can be used to show that multistage REMs are minimal. However it remains to show that \(\pi_{xy}(\Lambda(X,L))\) is dense in \(X\). 
This follows from irreducibility: by admissibility, $\pm1$ are not eigenvalues of $W$, so the characteristic polynomial of $W$ is irreducible over $\Q$.
This implies that $W$ cannot have a proper $\Q$-invariant subspace, and thus the projection $\pi_{xy}(\Lambda(X,L))$ is dense.
\end{proof}

\subsection{\(\mathcal M\) is a monoid of Pisot matrices}\label{sec:monoid}

We prove Lemma \ref{lem:monoid} establishing that \(\mathcal M\) is a monoid of Pisot matrices. 

\begin{proof}[Proof of lemma \ref{lem:monoid}]
For a  \(3\times 3 \) matrix \(M\) label its eigenvalues \(\lambda_1(M), \lambda_2(M)\), and \(\lambda_3(M)\) and assume that they are ordered by increasing modulus. Let \(W=M_{n_0} \cdots M_{n_{L-1}} \) where each \(M_{n_i}\in \S\).

By a change of basis we have
\[
P_n=S^{-1}M_nS=\begin{bmatrix}
0 & 1 & 0 \\
0 & 1 & 1 \\
1 & 0 & n
\end{bmatrix} \quad 
where \quad  S = 
\begin{bmatrix}
1 & 0 & 0 \\
0 & 1 & 0 \\
0 & 1 & 1
\end{bmatrix}.
\]
The matrix \(P_n\) is primitive (has a strictly positive power) because 
\[ P_n^3 = \begin{bmatrix} 1 & 1 & 1+n \\ 1+n & 2 & 1+n+n^2 \\ n^2 & 1+n & 1+n^3\end{bmatrix}\]
therefore by the Perron-Frobenius theorem \(\lambda_3(P_n)>1\). It follows that that the leading eigenvalue of the product $P=P_{n_0} \cdots P_{n_{L}}\cdots P_{n_2}P_{n_1}$ is real and larger than $1$ since it is a finite product of primitive matrices and therefore primitive.  Note that the products $P=P_{n_0} \cdots P_{n_{L-1}}$ and \(W=M_{n_0} \cdots M_{n_{L-1}}\) have the same eigenvalues. Thus, we conclude that the leading eigenvalue $\lambda_3(W)$ is real and larger than $1$.

Arguing similarly as in the previous paragraph, we can use the Perron-Frobenius theorem to show \(\lambda_1(M_n)>0\):
by a change of basis of \(M_n^{-1}\) we have
\[
Q_n=A^{-1}M_n^{-1} A=\begin{bmatrix}
0 & 1 & 0 \\
0 & 2 & 1 \\
1 & -5+n & -2+n
\end{bmatrix} \quad 
where \quad  A = 
\begin{bmatrix}
0 & 2 & 1 \\
0 & 1 & 0 \\
1 & 0 & 0
\end{bmatrix}.
\]
Note that \(Q_n\) is primitive because
\begin{align*}
Q_n^3 & = \begin{bmatrix} 1 & -1+n & n \\ n & -1+(-3+n)n & -1 + (-1+n)n \\ -1 +(-3+n)n  &5 + (-5+n)(-1+n)n & 3+ (-4+n)n^2 \end{bmatrix}
\end{align*}
which is positive for \(n\geq 6\). By the Perron-Frobenius this implies \(1/\lambda_1(Q_n)>1\) and thus \(\lambda_1(Q_n)\) is real, positive, and less than \(1\). Using the same argument as above, the product $Q=Q_{n_1}Q_{n_2}\cdots Q_{n_L} = A^{-1}(M_{n_L} \cdots M_{n_2}M_{n_1})^{-1} A$ is primitive and therefore its leading eigenvalue is real and larger than one. Thus we find \(0<\lambda_1(W)<1\).

It remains to show \(\lambda_2(W)<1\). For simplicity we show this for the conjugated matrices \(P_n\).
 The characteristic polynomial $q_P$ of the matrix $P$ has the form
\begin{eqnarray*}
q_P(x)&=&x^3 - \Tr(P) x^2 + b(P) x -1\\
&=& x^3-(P_{1,1}+P_{2,2}+P_{3,3}) x^2 +([P]_{1,1}+[P]_{2,2}+[P]_{3,3})x-1
\end{eqnarray*}
where \(P_{i,j}\) denotes the entry of the matrix in the \(i\)-th column and \(j\)-th row and \([P]_{i,j}\) denotes the minor of $P$ obtained by deleting the \(i\)-th row and \(j\)-th column (i.e., the determinant of the submatrix obtained by deleting row \(i\) and column \(j\)). Evaluating \(q_P\) and its derivatives at \(-1\) and \(1\) we find 
\[
q_P(-1)=-1, \quad q_P'(0)=b(P), \quad q_n(1)=-\Tr(P)+b(P) \quad \text{and} \quad q_n'(1)=3-2\Tr(P)+b(P) . 
\]
Since \(\lambda_1>0\) we find that \(\lambda_2<1\) as long as \(b(P)<\Tr(P)\).

In order to prove that \(b(P)<\Tr(P)\) we need one fact about the signs of the minors of \(P\). We claim that \(P^{-1}\) can be written as
\[P^{-1}=
\begin{bmatrix}
a_{11} & -a_{12} & a_{13} \\
a_{21} & -a_{22} & a_{33} \\ 
-a_{31} & a_{32} & -a_{33}
\end{bmatrix}  
\]
where $a_{ij}$ are non-negative integers for $i,j=1, 2$ and $3$. The proof of this fact is postponed until after our main argument in which we prove $b(P)<\Tr(P)$. For an arbitrary \(3\times3\) matrix \(A\), the inverse can be calculated in terms of the minors of \(A\)
\begin{align*}A^{-1}&=
\begin{bmatrix}
a & b & c \\ d & e & f \\ g & h & i 
\end{bmatrix}  ^{-1}
= \frac{1}{\text{det} (A)}  \begin{bmatrix} [A]_{1,1}  &-  [A]_{1,2} & [A]_{1,3} \\ -[A]_{2,1}  &  [A]_{2,2} & -[A]_{2,3} \\ [A]_{3,1}  &-  [A]_{3,2} & [A]_{3,3}  \end{bmatrix}.
\end{align*}

Since \([P]_{2,2}\leq 0\) and \([P]_{3,3}\leq 0\), we have
\begin{align*}
b(P) & = [P]_{1,1}+[P]_{2,2}+[P]_{3,3} \leq [P]_{1,1}
\end{align*}
Thus \([P]_{1,1} \leq \Tr(P) \) implies that \(b(P) \leq \Tr(P)\). We use induction on the length of the product  \(P\) to prove that \([P]_{1,1} \leq P_{3,3}\). Since \(P\) has non-negative entries this will imply \([P]_{1,1} \leq \Tr(P)\).

In the base case, \(P=P_{n_0}\), and we have 
\begin{align*}
[P_{n_0}]_{1,1} =n_0 \leq n_0+1= P_{3,3}. 
\end{align*}
For the inductive step assume that \([P]_{1,1}<\Tr(P)\) for any \(P\) a product of \(L-1\) matrices. Let \(P'\) be a product of \(L\) matrices. We can write \(P'=PP_{n_L}\) where 
\[P= P_{n_0} \cdots P_{n_{L-1}} =\begin{bmatrix} 
x_{11} & x_{12} & x_{13}\\
x_{21} & x_{22} & x_{23}\\
x_{31} & x_{32} & x_{33}
\end{bmatrix}.\]
The matrix \(P'\) has the form
\begin{align*}
P'=P P_{n_L}=  \begin{bmatrix}
x_{13}  & x_{11}+x_{12} & x_{12}+n_L x_{13}\\
x_{23} & x_{21}+x_{22} & x_{22}+n_L x_{23}\\
x_{33}  & x_{31}+x_{32} & x_{32}+n_L x_{33}
\end{bmatrix}.
\end{align*}
Now we have
\begin{align*}
 [P']_{1,1}& =x_{21}x_{32}-x_{22}x_{31}+n_L(x_{22}x_{33}-x_{23}x_{32})+n_L(x_{21}x_{33}-x_{23}x_{31}) \\
& =[P]_{3,1} -[P]_{2,1}n_L + [P]_{1,1} n_L \\
& \leq [P]_{1,1}n_L \\
& \leq x_{33} n_L \\
& \leq x_{32} + x_{33}n_L =P'_{3,3}.
\end{align*}
Between lines three and four we applied the inductive hypothesis and between lines four and five we used the fact that the matrix has non-negative entries.

Next we prove the fact about the signs of the entries of \(P^{-1}\). Label the entries of \(P^{-1}\) as 
\[P^{-1} =  \begin{bmatrix}
a_{11} & -a_{12} & a_{13} \\
a_{21} & -a_{22} & a_{33} \\ 
-a_{31} & a_{32} & -a_{33}
\end{bmatrix}   \]
where \(a_{ij} \geq 0 \).
First we use induction on the length of the matrix product to show the following six inequalities 
\begin{align*}
a_{1j} &> 3 a_{2j} \text{ for } j=1,2, \text{ or } 3 \\
a_{1j} &> 3 a_{3j} \text{ for } j=1,2, \text{ or } 3.
\end{align*}
In the base case we have 
\[ P_{n_1}^{-1} = \begin{bmatrix} n_1 & -n_1 & 1 \\ 1 & 0 & 0 \\ -1 & 1 & 0   \end{bmatrix}.\]
Since \(n \geq 6 \) the inequalities hold by inspection. For the inductive step let \(P' = P P_{n_L}  \) be a product of \(L+1\) matrices. Then we have 
\begin{align*}
P'^{-1} & = P_{n_L}^{-1}P^{-1} = \begin{bmatrix} - a_{31} + n_L (a_{11}-a_{21})  & a_{32} + n_L (a_{22}-a_{12})  & - a_{33} + n_L (a_{13}-a_{23})  \\  a_{11} &-a_{12} & a_{13} \\  a_{21}-a_{11} & a_{12}-a_{22} & a_{23}-a_{13} \end{bmatrix}.
\end{align*}
Using the inductive hypothesis we have
\begin{align*}
(a_{11}-a_{21} ) n_L - a_{31}>a_{11}(n_L-\frac{1}{3}n_L-\frac{1}{3}) > 3 a_{11}
\end{align*}
since \(n_L \geq 6 \). This shows \(a_{11} > 3 a_{21}\). For \(P'\), again using the inductive hypothesis 
\begin{align*}
(a_{11}-a_{21} ) n_L - a_{31}&> a_{11}(n_L - \frac{1}{3}) - a_{21} n_L > (n_L-1)(a_{11}-a_{21}) -a_{21}+ \frac{2}{3} a_{11} \\
& > (n_L-1)(a_{11}-a_{21})
\end{align*}
and \(n_L \geq 6 \) from which we deduce that \(a_{11}> 3 a_{31}\). The calculations in the proofs of the remaining four inequalities are identical. 

Finally we complete the proof of the signs of the entries of \(P^{-1}\). Once again we induct on the length of the matrix product. The base case holds by inspection. In the inductive step we compute the signs of the entries of the first column of \(P'^{-1}\). We have
\begin{align*}
- a_{31} + n_L (a_{11}-a_{21})> a_{11} (n_L - 1/3 -1/3) > 0
\end{align*}
and 
\begin{align*}
a_{21} - a_{11}<a_{11}(1-1/3)<0.
\end{align*}
Similar calculations show that the signs of the other entries are as stated.
\end{proof}

\begin{figure}[h]
\centering
\includegraphics[scale=0.3]{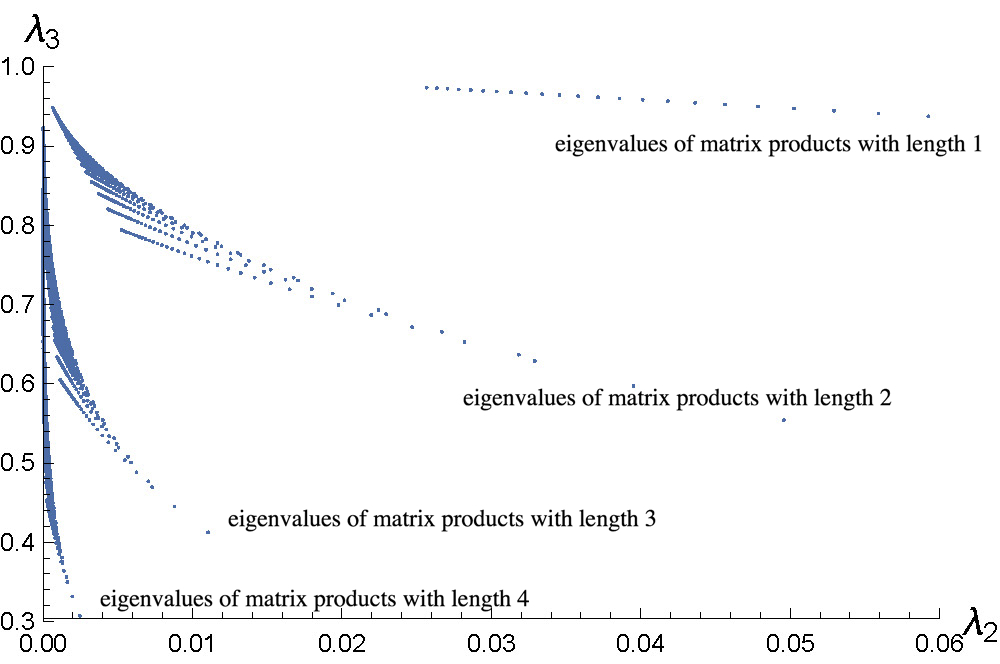}
\caption{Two of the three eigenvalues for matrices in the monoid. Each cluster of points corresponds to matrix products with the same length.} \label{eigenvalue-plot}
\end{figure}

\subsection{Proof of Theorem \ref{thm:mult}}
Let $W=M_{n_{L}}\cdots M_{n_1}$ be a matrix in $\mathcal M_R$ (Section \ref{sec:cons-multi}) and $\lambda_1$, $\lambda_2$, $\lambda_3$ be the eigenvalues of $W$ such that $0<\lambda_1<\lambda_2<1<\lambda_3$ (Lemma \ref{lem:monoid}). Let $\xi_1=(1, x_0, x_0')$ and $\xi_2=(1,y_0, y_0')$ be eigenvectors of $W$ with respective eigenvalues $\lambda_1$ and $\lambda_2$. Define the product $W_{k}=M_{n_{k}}\cdots M_{n_{1}}$ and $\xi_1^k=(1, x_k, x'_k)$ as a scaling of $W_k \xi_1$.

 Although the \(\xi_i\) are not eigenvectors they do satisfy the important property
\begin{align*}
M_{n_{k+1}} \xi_1^k = x_k \xi_1^{k+1}
\end{align*}
because
\begin{align*}
\begin{bmatrix}0 & 1 & 0 \\ 0 & 0 & 1 \\ 1 & -n_{k+1} & n_{k+1}+1 \end{bmatrix} \begin{bmatrix}1 \\ x_k \\ x_k'\end{bmatrix} & = \begin{bmatrix}x_k \\ x_k' \\  1-x_k n_{k+1} + x_k'(n_{k+1}+1) \end{bmatrix} \\
& = x_k \begin{bmatrix}1 \\ x_k' /x_k\\  \left(1-x_k n_{k+1} + x_k'(n_{k+1}+1) \right)/x_k \end{bmatrix} \\
& = x_k \xi_1^{k+1}.
\end{align*}

Similarly we define $\xi_2^k=(1, y_k, y'_k)$ be a scaling of $W_k \xi_2$. Recall the projection $\pi_{xy}^k$ at stage $k$ where $1\leq k \leq L$ is defined by the formula
\[
\pi_{xy}^k(\mathbf x) = (\xi^k_1 \cdot \mathbf x, \ \xi^k_2 \cdot \mathbf x)
\]
Let $Y_k $ be the set $A^k_0$ of the multistage REM $T_W$ associated to $W$. More precisely, $Y_k$ is a rectangle of width $x_k$ and height $y_k$ and the upper right vertex of $Y_k$ is $(1,1)$.  Define 
\[
\Lambda_{X_k}=\{\mathbf x \in \Z^3 | \ \pi^k_c(\mathbf x) \in X \} \quad \mbox{and} \quad \Lambda_{Y_k}=\{\mathbf x \in \Z^3 | \ \pi^k_c(\mathbf x) \in Y_k \}.
\]

Define the affine map 
\[
\Psi_k: 
\begin{bmatrix}
a \\ 
b \\
c
\end{bmatrix}  \mapsto 
(M_{n_{k+1}})^T 
\begin{bmatrix}
a \\ 
b \\ 
c
\end{bmatrix}+ 
\begin{bmatrix}
1 \\
-1 \\ 
0
\end{bmatrix}.
\]
We claim that $\Psi_k: \Lambda_{X_{k+1}} \to \Lambda_{Y_k}$ is a bijection. To prove the statement, we first show that $\Psi_k(\mathbf x) \in \Lambda_{Y_k}$ for $\mathbf x \in \Lambda_{X_{k+1}}$, i.e. 
\[
\pi_{xy}^k\circ \Psi_k(\omega) = (\xi^k_1 \cdot \Psi_k(\omega), \ \xi^k_2 \cdot \Psi_k(\omega)) \in (1-x_k,1) \times (1-y_k, 1).
\]
We compute the $x$-component of the projection $\pi_{xy}^k \circ \Psi_k(\omega)$ 
\begin{eqnarray*}
\xi^k_1 \cdot \Psi_k(\omega) &=& \xi_1^k \cdot \left (M^T_{n_{k+1}} \omega + \begin{bmatrix}1 \\ -1 \\ 0\end{bmatrix} \right )\\
&=& M_{n_{k+1}}\xi^k_1 \cdot \omega + \xi_1^k  \cdot (1,-1,0)\\
&=& x_k \ \xi_1^{k+1} \cdot \omega + 1-x_k
\end{eqnarray*}
By the assumption $\omega \in \Lambda_{X_{k+1}}$, we have $\xi_1^{k+1} \cdot \omega \in (0,1)$. Therefore we conclude that 
\[
\xi_1^k \cdot \Psi_k(\omega) \in (1-x_k, 1).
\]
Using the same argument, we can show that the $y$-component of $\pi_{xy} \circ \Psi_k (\omega) \in (1-y_1, 1)$. Moreover, the inverse $\Psi^{-1}$ is given by 
\[
\Psi_k^{-1}: \omega \mapsto (M^T_{n_{k+1}})^{-1} \left (\omega- \begin{bmatrix} 1 \\ -1 \\ 0 \end{bmatrix} \right ).
\] 
Thus, the map $\Psi_k: \Lambda_{X_{k+1}} \to \Lambda_{Y_k}$ is a bijection.

We apply the same argument as in Section \ref{prf:single} to show the renormalization of multistage REMs. Here we show that $\Psi_k$ corresponds to a return map of the multistage REM $T_{W_k}$. Let $\omega_0 \in \Lambda_{Y_k}$ and $q_0=\pi_{xy}^k(\omega_0) \in Y_k$. Define $\omega_0'=\Psi_k^{-1}(\omega_0) \in \Lambda_{X_{k+1}}$ and $q_0'=\pi_{xy}^{k+1}(\omega'_0)$. Let $\{\omega'_0, \omega'_1, \cdots\}$ be a sequence of consequence points of the lattice walk in $\Lambda_{X_{k+1}}$ where $\omega'_1\in \Lambda_{X_{k+1}}$ and 
\[
\pi^{k+1}_c(\omega'_j)=T^j_{W_{k+1}}(q).
\]
We have 
\[
q_1=q_0+\pi^k_c \circ \Psi_k (\omega'_1-\omega'_0) \in Y_k
\]
since 
\begin{eqnarray*}
q_1=q_0+\pi^k_c \circ \Psi_k (\omega'_1-\omega'_0) &=& \pi_{xy}^k(\omega_0)+ \pi_{xy}^k \circ \Psi_k(\omega'_1-\omega'_0)\\
&=& \pi_{xy}^k(\omega_0+\Psi_k(\omega'_1) -\Psi_k(\omega'_0)) \\
&=& \pi_{xy}^k (\omega_0-\omega_0+\Psi_k(\omega'_1))\\
&=&\pi_{xy}^k \circ \Psi_k(\omega'_1) \in Y_k.
\end{eqnarray*} 

Moreover, because the map $\Psi_k$ is bijective,  the point $q_1$ must be the image of the first return map $\hat T_{W_k}(q_0)|_{Y_k}=q_1$. It means that 
\[
\hat T_{W_k}|_{Y_k} = \phi_k^{-1} \circ T_{W_{k+1}} \circ \phi_k
\]
where the affine map $\phi_k$ maps $Y_k$ to the unit square $X=X_k$.

\section{Parameter space of multistage REMs}
The space of multistage REMs is a subset of \(\R^4\). It can be naturally parametrized by the two eigenvectors associated to a matrix in \(\mathcal M_R\) whose associated eigenvalues are less than one. Let \(\lambda_1,\lambda_2\) and \(\lambda_3\) denote the eigenvalues of a matrix in \(\mathcal M_R\) ordered by increasing magnitude. Scale the eigenvectors of \(\mathcal M_R\) so that the first coordinate is \(1\). Let \((1,x,x')\) denote the eigenvector associated to the eigenvalue \(\lambda_1\) and let \((1,y,y')\) denote the eigenvector associated to the eigenvalue \(\lambda_2\). In Figure \ref{fig:cantor} we plot points in the parameter space with $(x,x',y)$-coordinates colored by their $y'$-coordinate.

\old{\begin{figure}[h]
\centering
\includegraphics[width=\textwidth]{fig/param}
\old{\caption{The parameter space of multi-stage REMs in coordinates $(x,x',y,y')$ where $x,x',y$ and $y'$ are entries of the eigenvectors of the matrix products in $\mathcal M_R$. The projections of the points corresponding to multi-stage REMs onto xx'-plane (left) and yy'-plane (right). There is a Cantor set structure in each of the figures. }}
\caption{The 4-dimensional parameter space of multi-stage REMs. Each point in coordinates \((x,x',y,y')\) corresponds to a pair of eigenvectors \((1,x,x')\) and \((1,y,y')\) of a matrix associated to a multi-stage REM. We plot the projections of the parameter space onto the \(xx'\)- and \(yy'\)-planes.} \label{fig:cantor}
\end{figure}}

\begin{figure}[h]
\centering
\includegraphics[scale=0.6]{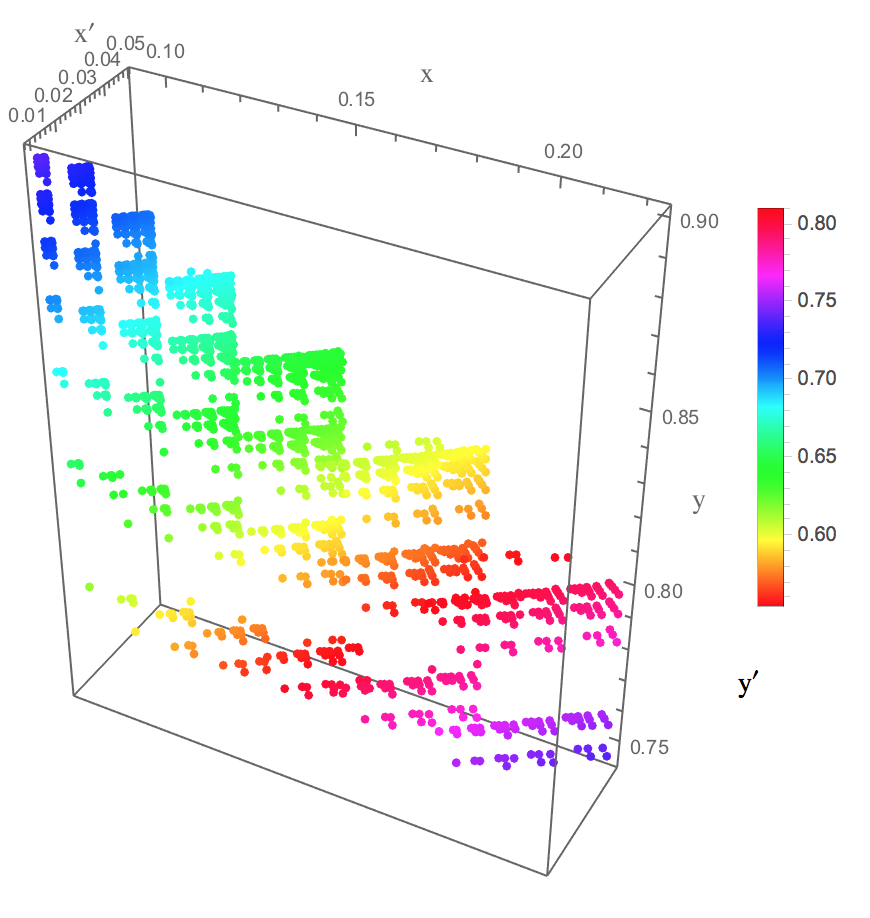}
\caption{The 4-dimensional parameter space of multi-stage REMs. Each point in coordinates \((x,x',y,y')\) corresponds to a pair of eigenvectors \((1,x,x')\) and \((1,y,y')\) of a matrix determining a multi-stage REM. Points are colored by the coordinate $y'$.} \label{fig:cantor}
\end{figure}

\begin{conj}
The closure of the parameter space of all renormalizable multistage REMs is a Cantor set in $\R^4$. 
\end{conj}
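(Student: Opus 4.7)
The plan is to realize $\overline{\mathcal P}$ as the attractor of a countably infinite iterated function system indexed by admissible symbolic sequences, and then verify the four defining properties of a topological Cantor set: nonempty, compact, perfect, and totally disconnected. Each $W\in\mathcal M_R$ is a finite word $(n_1,\ldots,n_L)$ in the alphabet $\{n\in\Z:n\ge 6\}$, and I would introduce the space of infinite admissible sequences
\[
\Omega_R=\bigl\{(n_1,n_2,\ldots):M_{n_k}\cdots M_{n_1}\in\mathcal M_A\text{ for all }k\ge 1\bigr\}
\]
together with an encoding map $\Phi:\Omega_R\to\R^4$ sending a sequence to the limiting pair of eigenvector parameters $(x_\infty,x_\infty',y_\infty,y_\infty')$. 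The closure $\overline{\mathcal P}$ then decomposes as the union of the finite-word points $\mathcal P$ (appearing as cylinder points) and the infinite-word image $\Phi(\Omega_R)$.

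Next I would establish the convergence needed to define $\Phi$. Since $\det M_n=1$, every $W\in\mathcal M$ satisfies $\lambda_1\lambda_2\lambda_3=1$, and the Perron-Frobenius argument applied to the primitive conjugate $P_n$ in the proof of Lemma \ref{lem:monoid} shows that $\lambda_3(W_k)\to\infty$ along any infinite admissible sequence. Consequently $\lambda_1(W_k)\lambda_2(W_k)\to 0$, and the two-dimensional generalized eigenspace of $W_k$ corresponding to the small eigenvalues is exponentially contracted by further left multiplication. From this I would deduce that the normalized eigenvector coordinates converge in $\R^4$ and that $\Phi$ is continuous with respect to the product topology on $\Omega_R$. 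Compactness of $\overline{\mathcal P}$ follows because admissibility forces $v_i\in(-1,1)^2$ together with positivity of the eigenvectors, confining $\mathcal P$ to a bounded subset of $\R^4$.

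With these tools in hand I would prove perfection and total disconnectedness. For perfection: any finite admissible word may be extended on the left by infinitely many generators $M_n$, since the admissibility inequalities are open and for $n$ large the contracted image lies in a fixed admissible direction; therefore every finite-word point is a limit of longer-word points, while every infinite-word point is the limit of its finite truncations. For total disconnectedness, the key is to show that distinct choices of the first symbol $n_1$ confine the resulting parameter to pairwise disjoint open sub-regions of $\R^4$ separated by inadmissible gaps, and that iterating this confinement through longer prefixes yields a neighborhood basis at each point of $\overline{\mathcal P}$ whose boundaries miss $\overline{\mathcal P}$.

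The hard part will be making total disconnectedness rigorous when the alphabet is countably infinite. As $n\to\infty$ the generators $M_n$ accumulate to a projective limit (by the estimates $\lambda_1^n\sim 1/(n-2)$ and $1-\lambda_2^n\sim 1/(n-2)$ of Lemma \ref{lem:monotone}), so one must verify an open-set-type condition ensuring that the gap between the $M_n$-branch and the $M_{n+1}$-branch does not collapse, and that no limit point from the tail $n\to\infty$ secretly fills in a gap between two low-$n$ branches. A careful use of the uniform estimates in Lemma \ref{lem:monotone}, combined with a one-point compactification of the alphabet to handle the accumulation direction, should supply the uniform gap bounds needed to close the argument.
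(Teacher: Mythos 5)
This statement is posed in the paper as an \emph{open conjecture}: the authors give no proof, so there is no paper argument to compare yours against, and what follows is an assessment of the proposal on its own terms. Your encoding $\Phi:\Omega_R\to\R^4$ by infinite admissible sequences, together with the contraction of the small-eigenvalue plane, is the natural framework, and compactness is easy for the reason you give. But the decomposition $\overline{\mathcal P}=\mathcal P\cup\Phi(\Omega_R)$ is already incomplete: the closure must also absorb limits of finite words $W^{(j)}=M_{n_1^{(j)}}\cdots M_{n_{L}^{(j)}}$ of \emph{bounded} length $L$ in which one or more indices $n_i^{(j)}\to\infty$. By Lemma \ref{lem:monotone} the single-generator parameters already accumulate at $(0,0,1,1)$ as $n\to\infty$, and composing this degeneration with bounded prefixes yields accumulation points that your symbolic space $\Omega_R$ does not represent. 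The one-point compactification of the alphabet therefore has to be built into $\Omega_R$ and $\Phi$ from the outset, not appended at the end to patch total disconnectedness, or you cannot even show that $\Phi(\Omega_R)$ is closed and the identification of $\overline{\mathcal P}$ with an attractor fails.

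The two remaining Cantor-set properties are where the real content lies, and the proposal leaves both open. For perfection you assert that every word in $\mathcal M_R$ admits infinitely many one-letter left-extensions inside $\mathcal M_R$, citing openness of the admissibility inequalities; but Definition \ref{defn:multi} imposes conditions at every intermediate stage $k=1,\dots,L$, and the stage-$k$ data $\xi_1^k=W_k\xi_1$, $\xi_2^k=W_k\xi_2$ are computed from the eigenvectors $\xi_1,\xi_2$ of the \emph{full} product, so left-multiplying by $M_{n_{L+1}}$ changes $\xi_1,\xi_2$ and hence perturbs all of the intermediate $\xi_i^k$ simultaneously. Without a quantitative argument, using the identity $M_{n_{k+1}}\xi_1^k=x_k\xi_1^{k+1}$ and the uniform bounds of Lemma \ref{lem:monotone}, showing that these perturbations are small enough to keep the strict inequalities of Definition \ref{defn:multi} satisfied at every stage, it remains possible that some admissible word is extension-maximal and produces an isolated point. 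For total disconnectedness you correctly identify that one needs a uniform open-set condition separating the cylinders of distinct first letters, but no mechanism is offered: as $n,n'\to\infty$ the branches crowd toward $(0,0,1,1)$, and nothing in the paper or in the proposal bounds the inter-branch gap from below, nor shows that the accumulation point itself is separated from each finite-$n$ branch, nor that the same separation persists recursively at deeper levels of the hierarchy. Those estimates are precisely the substance of the conjecture, and until they are supplied the proposal is a plan, not a proof.
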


\section{Appendix}
We give a computational proof of Theorem \ref{thm:single} when $n=6$.
\begin{proof}
Note that $Y=\phi_n^{-1}(X)$ and we consider the first return map $\hat T_{M_6}|_Y$ restricting to each element $\hat A_k=\phi_n^{-1}(A_k)$ for $k=0,1, \cdots, 6$. Let $\rho: X \to X$ be the map given by $(x,y) \mapsto (\lambda_1  x,\  \lambda_2  y)$. Let $v_k$ be the translation vector on the set $A_k \in \mathcal A$. We show that the map $\hat T_{M_6}|_Y$ consists of translations by vectors $\rho(v_k)$ on each $\hat A_k^\circ$.

For each point in $\hat A_k$, we associate a symbolic sequence tracking its orbit until it returns to the set $Y$. More precisely, let $\Omega=\{0,1, \cdots, 6\}^{\Z^+}$ be the set of sequences in $\{0,1, \cdots, 6\}$, and define $\iota: X \to \Omega$ to be the coding
\[
\iota(p)= \alpha_0 \ \alpha_1 \cdots \ \alpha_{m}, \quad \mbox{for $\alpha_j\in \{0,1, \cdots, 6\}$ and $T^{m}(p) \in Y$},
\]
where $A_{\alpha_j}$ is the tile containing $T^j(p)$.
Define $\mathcal R_{\iota(p)}=\{q \in Y | \ \iota(q)=\iota(p)\}$ the maximal set of points with the same coding associated to $\iota(p)$. The first return map $\hat T|_Y$ restricting to $\mathcal R_{\iota(p)}$ is the 
translation given by
\[
p \mapsto p+ \pi_{xy} (\sum_{i=0}^{m-1} \eta_i).
\]

By computation, we obtain that $\hat A_0=\mathcal R_{05} \cup \mathcal R_{013} \cup \mathcal R_{031}$. The first return map restricting on $\hat A_0$ is the translation by the vector $v'_i=\pi_{xy}(\eta'_0)$ where 
\[
\eta'_0=(0,-1,1)=\eta_0+\eta_5=\eta_0+ \eta_1+\eta_3=\eta_0 + \eta_3+\eta_1.
\]
Then we have
\[
\pi_{xy} (\eta'_0)=(-\lambda_1+\lambda_1^2, -\lambda_2+\lambda_2^2)=(\lambda_1(-1+\lambda_1), \ \lambda_2(-1+\lambda_2)).
\]
Since $\eta_0=(-1,1,0)$ we have $v'_0=\pi_{xy}(\eta'_0)=\rho(\pi_{xy}(\eta_0))=\rho(v_0)$.

The element $\hat A_1=\mathcal R_{0131}$ so that the map $\hat T_{M_6}|_Y$ translates $\hat A_1$ by vector $\pi_{xy} (\eta'_1)$ where
\[
\eta'_1=\eta_0+ \eta_1+\eta_3+\eta_1=\eta_0+2\eta_1+\eta_3=(0,0,1).
\]
Therefore, 
\[
\pi_{xy}(\eta'_1)=(\lambda_1^2, \ \lambda_2^2) = \rho\circ \pi_{xy} (\eta_1).
\]

Since $\hat A_2=\mathcal R_{05231} \cup \mathcal R_{013231} \cup \mathcal R_{01325}$ and $\eta_5=\eta_1+\eta_3$, we have $\hat T_{M_6}|_Y: p \mapsto p+\pi_{xy}(\eta'_2)$ where
\[
\eta'_2=\eta_0+\eta_5+\eta_2+\eta_5=2\eta_0+3\eta_1+2\eta_3=(0,-1,2).
\]
It follows that 
\[
\pi_{xy} (\eta'_2) = (-\lambda_1+2\lambda_1^2, \ -\lambda_2+\lambda_2^2) = \left ( \lambda_1(-1+2\lambda_1), \lambda_2(-1+2\lambda_2) \right) =\rho\circ \pi_{xy} (\eta_2).
\]

The set $\hat A_3$ is the disjoint union of seven subsets 
\[
\hat A_3=\mathcal R_{0313265}\cup \mathcal R_{031665}\cup \mathcal R_{053265} \cup \mathcal R_{05665} \cup \mathcal R_{056235} \cup \mathcal R_{056613} \cup \mathcal R_{0562313}.
\]
Since 
\[
\eta_5=\eta_1+\eta_3 \quad \mbox{and} \quad \eta_6=\eta_0+\eta_1+\eta_3,
\]
the map $\hat T_{M_6}|_Y$ translates every well-defined point in $\hat A_3$ by the vector $\pi_{xy}(\eta'_3)$ for 
\[
\eta'_3=3\eta_0+4\eta_1+4\eta_3=(1,-5,4).
\]
Then we compute
\begin{eqnarray*}
\pi_{xy}(\eta'_3)&=&(1-5\lambda_1+4\lambda_1^2, \ 1-5\lambda_2+4\lambda_2^2)\\
&=& (\lambda_1^3-3\lambda^2_1+\lambda_1, \lambda_2^3-3\lambda_2^2+\lambda_2)\\
&=&  (\lambda_1 (1-3\lambda_1+\lambda_1^2), \ \lambda_2(1-3\lambda_2+\lambda_2^2)) \\
&=& \rho \circ  \pi_{xy} (\eta_3).
\end{eqnarray*}

The element $\hat A_4=\mathcal R_{03166613}$ with translation vector $\pi_{xy}(\eta'_4)$ under the first return map $\hat T_{M_6}|_Y$ where 
\[
\eta'_4=4\eta_0+5\eta_1+5\eta_3 =\eta'_0+\eta'_3=(1,-6,5).
\]
We have shown that for each $j=0,1$ and $3$, we have $\pi_{xy}(\eta'_j)=\rho\circ \pi_{xy}(\eta_j)$. Therefore, 
\begin{eqnarray*}
\pi_{xy}(\eta'_4)&=& \pi_{xy}(\eta'_0+\eta'_3)\\
&=&\pi_{xy}(\eta'_0)+\pi_{xy}(\eta'_3)\\
&=&\rho \circ \pi_{xy}(\eta_1)+ \rho \circ \pi_{xy}(\eta_3)\\
&=&\rho\circ \pi_{xy} (\eta_1+\eta_3) = \rho \circ \pi_{xy}(\eta_4).
\end{eqnarray*}

The set $\hat A_5$ is the union of seven disjoint subsets
\[
\mathcal R_{056613231}\cup \mathcal R_{0523613231} \cup \mathcal R_{052361325} \cup \mathcal R_{0523141325} \cup \mathcal R_{052316325} \cup \mathcal R_{0132316325} \cup \mathcal R_{013231665}.
\]
The vector 
\[
\eta'_5=4\eta_0+6\eta_1+5\eta_3=(1,-5,5).
\]
On the other hand, 
\[
\eta'_5=\eta'_1+\eta'_3.
\]
By the same argument as above, we have 
\[
\pi_{xy}(\eta'_5)= \rho \circ \pi_{xy}(\eta_5).
\]

The element $\hat A_6$ is partitioned into 19 subsets which are listed here
\begin{center}
$\mathcal R_{03166613231}, \ \mathcal R_{0566613231}, \ \mathcal R_{05623613231}, \ \mathcal R_{0562361325}, \ \mathcal R_{05623141325}, \ \mathcal R_{0566132325},$\\
$\mathcal R_{0 5 6 2 3 1 3 2 3 2 5 }, \mathcal R_{0 5 6 2 3 1 6 3 2 5}, \mathcal R_{0 5 2 3 6 1 3 2 3 2 5 }, \mathcal R_{0 5 2 3 2 3 1 3 2 3 2 5}, \mathcal R_{0 5 2 3 2 3 1 6 3 2 5}, \mathcal R_{0 1 3 2 3 1 6 6 6 5},$\\
$\mathcal R_{0 5 2 3 6 1 3 2 6 5}, \mathcal R_{0 5 2 3 2 3 1 3 2 6 5}, \mathcal R_{0 5 2 3 2 3 1 6 6 5}, \mathcal R_{0 5 2 3 1 4 1 3 2 6 5}, \mathcal R_{0 5 2 3 1 6 3 2 6 5}, \mathcal R_{0 1 3 2 3 1 6 3 2 6 5}, \mathcal R_{0 1 3 2 3 1 6 6 6 1 3}$.
\end{center}
Then 
\[
\eta'_6=5\eta_0+7\eta_1+6\eta_3=\eta'_0+\eta'_1+\eta'_3.
\]
The translation vector for the map $\hat T_{M_6}|_{Y}$ on $\hat A_6$ satisfies the equality 
\[
\pi_{xy}(\eta'_6)=\pi_{xy}(\eta'_0+\eta'_1+\eta'_3)=\rho\circ \pi_{xy}(\eta_0+\eta_1+\eta_3)=\rho\circ \pi_{xy}(\eta_6)
\]. 
\end{proof}

\begin{figure}[h]
\centering
\includegraphics[scale=0.5]{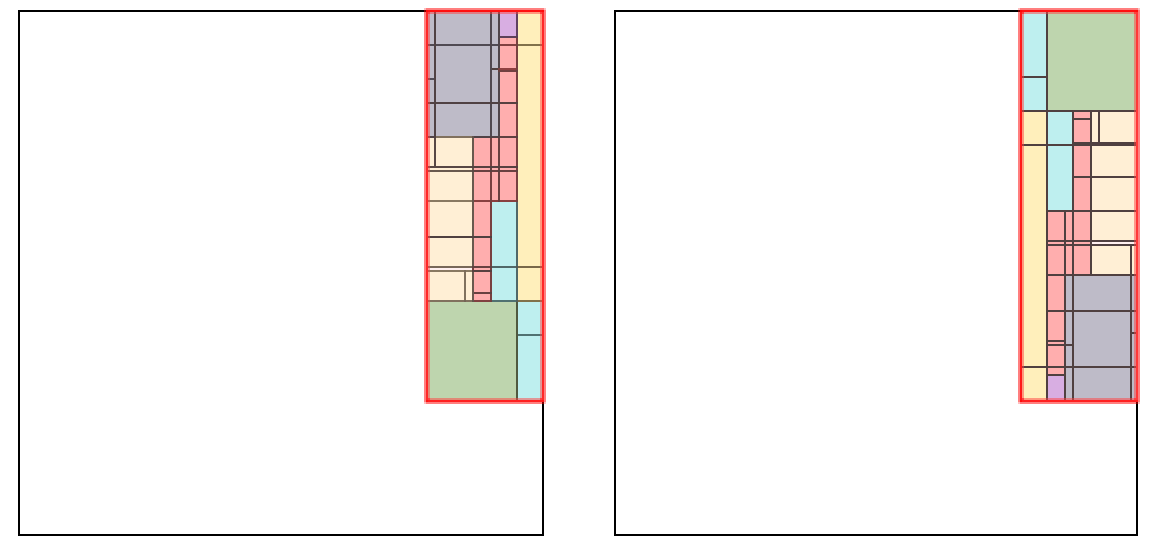}
\caption{The first return set $Y$ partitioned into tiles with the same symbolic codings}
\label{fig:return6}
\end{figure}

\subsection*{Acknowledgments}   
The authors would like to thank Richard Schwartz and Patrick Hooper for many helpful conversations. I. Alevy is supported by the
NSF grant  DMS-1713033. R. Kenyon is supported by the NSF grant DMS-1713033 and the Simons Foundation award 327929.

\bibliographystyle{hep}
\bibliography{mybib}

\end{document}